\def\opn#1#2{\def#1{\operatorname{#2}}} 
\opn\chara{char} \opn\length{\ell} \opn\pd{pd} \opn\rk{rk}
\opn\projdim{proj\,dim} \opn\injdim{inj\,dim} \opn\rank{rank}
\opn\depth{depth} \opn\grade{grade} \opn\height{height}
\opn\embdim{emb\,dim} \opn\codim{codim}
\opn\Tr{Tr} \opn\bigrank{big\,rank}
\opn\superheight{superheight}\opn\lcm{lcm}
\opn\trdeg{tr\,deg}%
\opn\reg{reg} \opn\lreg{lreg} \opn\skel{skel}
\opn\multideg{multideg}
\opn\div{div} \opn\Div{Div} \opn\cl{cl} \opn\Cl{Cl}
\opn\Spec{Spec} \opn\Supp{Supp} \opn\supp{supp} \opn\Sing{Sing}
\opn\Ass{Ass}
\opn\Ann{Ann} \opn\Rad{Rad} \opn\Soc{Soc}
\opn\Ker{Ker} \opn\Coker{Coker} \opn\Im{Im} \opn\Hom{Hom}
\opn\Tor{Tor} \opn\Ext{Ext} \opn\End{End} \opn\Aut{Aut}
\opn\id{id}
\opn\nat{nat}
\opn\pff{pf}
\opn\Pf{Pf} \opn\GL{GL} \opn\SL{SL} \opn\mod{mod} \opn\ord{ord}
\opn\aff{aff} \opn\con{conv} \opn\relint{relint} \opn\st{st}
\opn\lk{lk} \opn\cn{cn} \opn\core{core} \opn\vol{vol}
\opn\link{link} \opn\star{star} \opn\skel{skel} \opn\Reg{Reg}
\opn\gr{gr}
\def\pot#1#2{#1[\kern-0.28ex[#2]\kern-0.28ex]}
\opn\dirlim{\underrightarrow{\lim}}
\opn\inivlim{\underleftarrow{\lim}}
\def\Implies{\ifmmode\Longrightarrow \else
     \unskip${}\Longrightarrow{}$\ignorespaces\fi}
\def\implies{\ifmmode\Rightarrow \else
     \unskip${}\Rightarrow{}$\ignorespaces\fi}
\def\iff{\ifmmode\Longleftrightarrow \else
     \unskip${}\Longleftrightarrow{}$\ignorespaces\fi}
\newtheorem{thm}{Theorem}[section]
\newtheorem{cor}[thm]{Corollary}
\newtheorem{defn}[thm]{Definition}
\newtheorem{exam}[thm]{Example}
\newtheorem{rem}[thm]{Remark}
\numberwithin{equation}{section}
\begin{document}
\bibliographystyle{amsplain}

\title{Cohen-Macaulay and Gorenstein path ideals of trees }
\author{Sara Saeedi Madani and Dariush Kiani }
\thanks{2010 \textit{Mathematics Subject Classification.} 13F55, 05C75, 05C05.}
\thanks{\textit{Key words and phrases.} Path ideals, Cohen-Macaulay, Gorenstein, Matroid, Fitting t-partitioned tree. }

\address{Sara Saeedi Madani, Department of Pure Mathematics,
 Faculty of Mathematics and Computer Science,
 Amirkabir University of Technology (Tehran Polytechnic),
424, Hafez Ave., Tehran 15914, Iran, and School of Mathematics, Institute for Research in Fundamental Sciences (IPM),
P.O. Box 19395-5746, Tehran, Iran.} \email{sarasaeedi@aut.ac.ir}
\address{Dariush Kiani, Department of Pure Mathematics,
 Faculty of Mathematics and Computer Science,
 Amirkabir University of Technology (Tehran Polytechnic),
424, Hafez Ave., Tehran 15914, Iran, and School of Mathematics, Institute for Research in Fundamental Sciences (IPM),
P.O. Box 19395-5746, Tehran, Iran.} \email{dkiani@aut.ac.ir}

\begin{abstract}

\noindent Let $R=k[x_{1},\ldots,x_{n}]$, where $k$ is a field. The
path ideal (of length $t\geq 2$) of a directed graph $G$ is the
monomial ideal, denoted by $I_{t}(G)$, whose generators
correspond to the directed paths of length $t$ in $G$. Let
$\Gamma$ be a directed rooted tree. We characterize all such trees whose path ideals are unmixed and Cohen-Macaulay.
 Moreover, we show that $R/I_{t}(\Gamma)$ is Gorenstein if and only if
the Stanley-Reisner simplicial complex of $I_{t}(\Gamma)$ is a matroid.

\end{abstract}

\maketitle

\section{Introduction}

\noindent Let $G$ be a directed graph over $n$ vertices and $t$ be a fixed integer such that $2\leq t\leq n$.
A sequence $v_{i_{1}},\ldots ,v_{i_{t}}$ of
distinct vertices, is called a \textbf{path} of length $t$ if there are
$t-1$ distinct directed edges $e_{1},\ldots ,e_{t-1}$ where
$e_{j}$ is a directed edge from $v_{i_{j}}$ to $v_{i_{j+1}}$.
Then the \textbf{path ideal} of $G$ of length $t$ is the monomial ideal
$$I_{t}(G)=(x_{i_{1}}\cdots x_{i_{t}} : v_{i_{1}},\ldots ,v_{i_{t}}~\mathrm{is~a~path~of~length}~t~\mathrm{in}~G)$$
in the polynomial ring $R=k[x_{1},\ldots ,x_{n}]$ over a
field $k$. Some properties of the path ideal of cycles and trees were studied in \cite{VJ} and \cite{SKT}.

In this paper, we focus on the path ideals of trees. Throughout the paper, we mean by tree, a directed rooted tree and by a path, a directed path.
We investigate some algebraic properties of this ideal. In \cite{V}, a characterization of all trees whose edge ideals, that is the case $t=2$, are Cohen-Macaulay is given. Also, in \cite{HHZ}, a characterization of all chordal graphs whose edge ideals are Cohen-Macaulay (resp. Gorenstein) is given. When $G$ is a tree, it is obviously chordal. So, their results also hold for trees. In this paper, for a tree, we generalize these results for all $t\geq 2$.

This paper is organized as follows. In the next section, we recall several definitions and terminology which we need later.
In Section 3, we characterize all trees whose path ideals are unmixed and hence Cohen-Macaulay. For this purpose, we use the correspondence between clutters and simplicial complexes and the fact that the facet simplicial complex associated to the paths of length $t$ of $\Gamma$ is a simplicial tree. In Section 4, we show that complete intersection and Gorenstein properties of the path ideal of a tree are equivalent to the property that the tree has only one directed path of length $t$. Moreover, we prove that this is the case if and only if the Stanley-Reisner simplicial complex of the path ideal is a matroid. Finally, we deduce that these conditions are equivalent to Cohen-Macaulayness of all symbolic and ordinary powers of the ideal.

\section{Preliminaries}

\noindent A \textbf{simplicial complex} $\Delta$ on the vertex set
$V(\Delta)=\{v_{1},\ldots ,v_{n}\}$ is a collection of subsets of $V=V(\Delta)$
such that if $F\in \Delta$ and $G\subseteq F$, then $G\in  \Delta$. (We sometimes write $[n]$ for the set of vertices of a simplicial complex or a graph).\\
An element in $\Delta$ is called a \textbf{face} of $\Delta$, and
$F\in \Delta$ is said to be
a \textbf{facet} if $F$ is maximal with respect to inclusion. Let $F_{1},\ldots,F_{q}$ be all
the facets of simplicial complex
$\Delta$. We sometimes write $\Delta=\langle F_{1},\ldots,F_{q}\rangle$.\\
A vertex which is contained only in one facet, is called a \textbf{free} vertex of $\Delta$.\\
For every face $G\in \Delta$, we define the \textbf{star} and \textbf{link} of $G$ as below $$\mathrm{st}_{\Delta}G=\{F\in \Delta~:~G\cup F\in \Delta\},$$
$$\mathrm{lk}_{\Delta}G=\{F\in \Delta~:~G\cap F=\emptyset~,~G\cup F\in \Delta\}.$$
 The \textbf{dimension} of a face $F$ is $|F|-1$. Let
$d=\textrm{max}\{|F|~:~F\in \Delta \}$, then the
\textbf{dimension} of $\Delta$, denoted by $\textrm{dim}(\Delta)$,
is $d-1$. We say that $\Delta$ is \textbf{pure} if all its facets have the same dimension.\\
Let $f_{i}=f_{i}(\Delta)$ denote the number of faces of dimension
$i$. The sequence $f(\Delta)=(f_{0},f_{1},\ldots,f_{d-1})$ is
called the $f$-vector of $\Delta$. By the convention, we set $f_{-1}=1$.\\
The \textbf{reduced Euler characteristic} $\widetilde{\chi}(\Delta)$ of $\Delta$ is given by $$\widetilde{\chi}(\Delta)=-1+\sum_{i=0}^{d-1}(-1)^if_{i}.$$
The \textbf{facet ideal} of $\Delta$ is $$I(\Delta)=(\prod_{x\in F}x : F\mathrm{~is~a~facet~of~}\Delta).$$\\
Now we define the simplicial complex $\Delta_{t}(G)$ to be
$$\Delta_{t}(G)= \langle \{v_{i_{1}},\ldots ,v_{i_{t}}\} :
v_{i_{1}},\ldots ,v_{i_{t}}~\mathrm{is~a~path~of~length~t~in}~G\rangle,$$
where $G$ is a directed graph. So we have $I_{t}(G)=I(\Delta_{t}(G))$.\\
The \textbf{Stanley-Reisner ideal} of $\Delta$ is the monomial
ideal
$$I_{\Delta}=(\prod_{x\in F}x : F\notin \Delta).$$
The \textbf{Stanley-Reisner ring} of $\Delta$ is $k[\Delta]=R/I_{\Delta}$.\\
Let $\Delta = \langle F_{1},\ldots ,F_{q} \rangle$. A
\textbf{vertex cover} of $\Delta$ is a subset $A$ of $V$, with
the property that for every facet $F_{i}$ there is a vertex
$v_{j}\in A$ such that $v_{j}\in F_{i}$. A \textbf{minimal vertex
cover} of $\Delta$ is a subset $A$ of $V$ such that $A$ is a
vertex cover
and no proper subset of $A$ is a vertex cover of $\Delta$. The minimum number of vertices in a vertex cover is called the
\textbf{covering number} of $\Delta$, and it coincides with the height of $I(\Delta)$, ht$(I(\Delta))$.
A simplicial complex $\Delta$ is \textbf{unmixed} if all of its minimal vertex covers have the same cardinality.

Recall that a finitely generated graded module $M$ over a
Noetherian graded $k$-algebra $S$ is said to satisfy the Serre's
condition $S_{r}$ if depth $M_{P}\geq $min$(r,$ dim $M_{P})$, for
all $P\in \mathrm{Spec} (S)$. Thus, $M$ is Cohen-Macaulay if and only if it satisfies the
Serre's condition $S_r$ for all $r$.

A graded $R$-module $M$ is called \textbf{sequentially
Cohen-Macaulay} (resp. \textbf{sequentially $S_r$}) (over $k$) if there exists a finite filtration of graded $R$-modules
$0=M_{0}\subset M_{1}\subset\cdots \subset M_{r}=M$
such that each $M_{i}/M_{i-1}$ is Cohen-Macaulay (resp. $S_r$), and the Krull dimensions of the quotients
are increasing, i.e.
$$ \mathrm {dim} (M_{1}/M_{0}) < \mathrm{dim}(M_{2}/M_{1}) < \cdots < \mathrm{dim}(M_{r}/M_{r-1}).$$
\begin{thm}\label{seq-unmixed}
{\em (}see \cite [Lemma 3.6]{FV} and \cite[Corollary 2.7]{HTYZ}{\em)} Let $I$ be a squarefree monomial ideal in
$R=k[x_{1},\ldots,x_{n}]$. Then $R/I$ is Cohen-Macaulay {\em (}resp. $S_r${\em)} if and
only if $R/I$ is sequentially Cohen-Macaulay {\em (}resp. sequentially $S_r${\em)} and $I$ is unmixed.
\end{thm}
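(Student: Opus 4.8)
\medskip
\noindent The plan is to prove the two implications separately. The forward implication, ``$R/I$ Cohen--Macaulay (resp.\ $S_r$) $\Rightarrow$ $R/I$ sequentially Cohen--Macaulay (resp.\ sequentially $S_r$) and $I$ unmixed'', is essentially formal; the content of the theorem lies in the reverse implication, where squarefreeness of $I$ enters. Throughout we write $M=R/I$ and $d=\dim M$, and we use the standard fact that for a squarefree monomial ideal the minimal primary decomposition $I=P_{F_1}\cap\dots\cap P_{F_q}$ (indexed by the facets $F_i$ of the Stanley--Reisner complex, $P_F=(x_j:j\notin F)$) is a decomposition into prime ideals; hence $\Ass(M)=\{P_{F_1},\dots,P_{F_q}\}$, so $M$ has no embedded primes, and $I$ is unmixed exactly when $\dim R/P=d$ for every $P\in\Ass(M)$.

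For the forward implication, the trivial filtration $0\subset M$ shows immediately that a Cohen--Macaulay (resp.\ $S_r$) module is sequentially Cohen--Macaulay (resp.\ sequentially $S_r$): there is a single quotient, $M$ itself, and the strict-increase-of-dimensions condition is vacuous. To obtain unmixedness we invoke the standard fact that every associated prime $P$ of a Cohen--Macaulay module $M$ satisfies $\dim R/P=\dim M$; in the $S_r$ case, for $r\ge 2$, we instead use that a module satisfying $S_2$ over a Cohen--Macaulay ring is equidimensional. Combined with the absence of embedded primes for squarefree monomial ideals, this shows that all associated primes of $M$ have dimension $d$, i.e.\ $I$ is unmixed.

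For the reverse implication, assume $I$ is unmixed and $M$ is sequentially $S_r$ (resp.\ sequentially Cohen--Macaulay), via a filtration $0=M_0\subsetneq M_1\subsetneq\dots\subsetneq M_s=M$ with $\dim(M_1/M_0)<\dim(M_2/M_1)<\dots<\dim(M_s/M_{s-1})$. The key point is that, because $I$ is unmixed and squarefree, every nonzero submodule $N\subseteq M$ satisfies $\dim N=d$: indeed $\varnothing\ne\Ass(N)\subseteq\Ass(M)$, and every prime of $\Ass(M)$ has $\dim R/P=d$, so $\dim N=\max\{\dim R/P:P\in\Ass(N)\}=d$. Applying this with $N=M_1\ne 0$ gives $\dim(M_1/M_0)=\dim M_1=d=\dim M$. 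If $s\ge 2$ then $\dim(M_1/M_0)<\dim(M_2/M_1)\le\dim M_2\le d$, which forces $d<d$, a contradiction; hence $s=1$, and $M=M_1/M_0$ is itself $S_r$ (resp.\ Cohen--Macaulay), as desired.

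The step we expect to be the main obstacle is the equidimensionality input in the forward direction, in particular the fact that $S_2$ over a Cohen--Macaulay ring implies equidimensionality. We would also be careful that the $S_r$ version requires $r\ge 2$: for instance $I=(xy,xz)\subset k[x,y,z]$ is squarefree, satisfies $S_1$, and is even sequentially Cohen--Macaulay, yet is not unmixed, so the equivalence fails at $r=1$. Everything else is a routine manipulation of the dimension filtration, which collapses to the trivial one precisely when $I$ is unmixed.
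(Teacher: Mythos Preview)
The paper does not prove this statement at all: Theorem~\ref{seq-unmixed} is simply quoted from \cite[Lemma~3.6]{FV} and \cite[Corollary~2.7]{HTYZ}, with no argument given in the text, so there is nothing to compare your approach against.

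Your proof is correct. The reverse implication is exactly the standard observation that when $I$ is unmixed every associated prime of $M=R/I$ has dimension $d$, so every nonzero submodule of $M$ has dimension $d$, and the strict-increase condition on the filtration forces $s=1$; this is essentially the argument in the cited sources. The forward direction is routine, and your identification of the one nontrivial ingredient (that $S_2$ over the polynomial ring forces equidimensionality, hence unmixedness for squarefree $I$) is accurate. Your caveat that the $S_r$ equivalence requires $r\ge 2$ is correct and worth noting; the paper tacitly assumes this when it invokes the theorem in Corollary~\ref{CM}, where the hypothesis $r\ge 2$ appears explicitly.
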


A \textbf{clutter} $\mathcal{C}$ with finite vertex set $X$ is a family of subsets of $X$, called edges,
none of which is included in another. The set of vertices and edges of $\mathcal{C}$ are denoted
by $V(\mathcal{C})$ and $E(\mathcal{C})$, respectively. The
set of edges of a clutter can be viewed as the set of facets of a simplicial complex.\\
Let $\mathcal{C}$ be a clutter with finite vertex set $X=\{v_1,\ldots ,v_n\}$ with no isolated vertices, i.e., each vertex occurs in at least one
edge. The edge ideal
of $\mathcal{C}$, denoted by $I(\mathcal{C})$, is the ideal of $R$ generated by all monomials
$\prod_{v_i\in E} x_i$ such that $E\in E(\mathcal{C})$. The edge ideal of a clutter could be seen as the facet ideal of its corresponding
simplicial complex.\\
A clutter has the \textbf{K\"{o}nig property} if
the maximum number of pairwise disjoint edges equals the covering number. A
\textbf{perfect matching of $\mathcal{C}$ of K\"{o}nig type} is a collection $E_1,\ldots ,E_g$ of pairwise disjoint
edges whose union is $X$ and such that $g$ is the height of $I(\mathcal{C})$.\\
Let $A$ be the incidence matrix of a clutter $\mathcal{C}$. A clutter $\mathcal{C}$ has a
cycle of length $r$ if there is a square sub-matrix of $A$ of order $r\geq 3$ with exactly
two 1's in each row and column. A clutter without odd cycles is called \textbf{balanced}
and an acyclic clutter is called \textbf{totally balanced}.

A \textbf{leaf} of a simplicial complex $\Delta$ is a facet $F$ of $\Delta$ such that either $F$ is the only facet of $\Delta$, or there exists a facet $G$ in
$\Delta$, $G\neq F$, such that $F\cap F' \subseteqF\cap G$ for every facet $F'\in \Delta$, $F'\neq F$. A simplicial complex $\Delta$ is a called \textbf{simplicial tree} if
$\Delta$ is connected and every non-empty subcomplex $\Delta'$ contains a leaf. By a subcomplex,
we mean any simplicial complex of the form $\Delta'=\langle F_{i_{1}},\ldots ,F_{i_{q}} \rangle$, where $\{F_{i_{1}},\ldots ,F_{i_{q}}\}$ is a subset of the facets of $\Delta$.
We adopt the convention that the empty simplicial complex is also a simplicial tree. A simplicial complex $\Delta$ with the property that every connected component of $\Delta$ is a simplicial tree is called a \textbf{simplicial forest}.\\
In \cite{HHTZ}, it was shown that a clutter $\mathcal{C}$ is totally balanced if and only if it is the clutter of the facets of a simplicial forest
\cite[Theorem 3.2]{HHTZ}.\\
Moreover, in \cite{F}, it was shown that a simplicial tree (forest) has the K\"{o}nig property \cite[Theorem 5.3]{F}.

\section{ Trees with Cohen-Macaulay path ideals }

\noindent A tree $\Gamma$ can be viewed as a directed graph by picking any vertex of $\Gamma$ to
be the root of the tree, and assigning to each edge the direction ``away" from the
root. Because $\Gamma$ is a tree, the assignment of a direction will always be possible. A
leaf is any vertex in $\Gamma$ adjacent to only one other vertex. The level of a vertex $v$,
denoted level$(v)$, is one fewer than the length of the unique path starting at the root and ending at $v$. The height of a tree, denoted height$(\Gamma)$, is then given by
height$(\Gamma):=\mathrm{max}_{v\in V}$level$(v)$.

\begin{exam}\label{a}
\em{Let $\Gamma$ be the tree in the Figure \ref{1}, in which $v_1$ is the root and $\mathrm{height}(\Gamma)=3$. Also, let $t=4$. Then we have $$I_4(\Gamma)=(x_1x_2x_4x_8,x_1x_2x_4x_9,x_1x_3x_6x_{10},x_1x_3x_7x_{11}).$$}
\begin{center}
\begin{figure}
\hspace{0 cm}
\includegraphics[height=3.8cm,width=4.9cm]{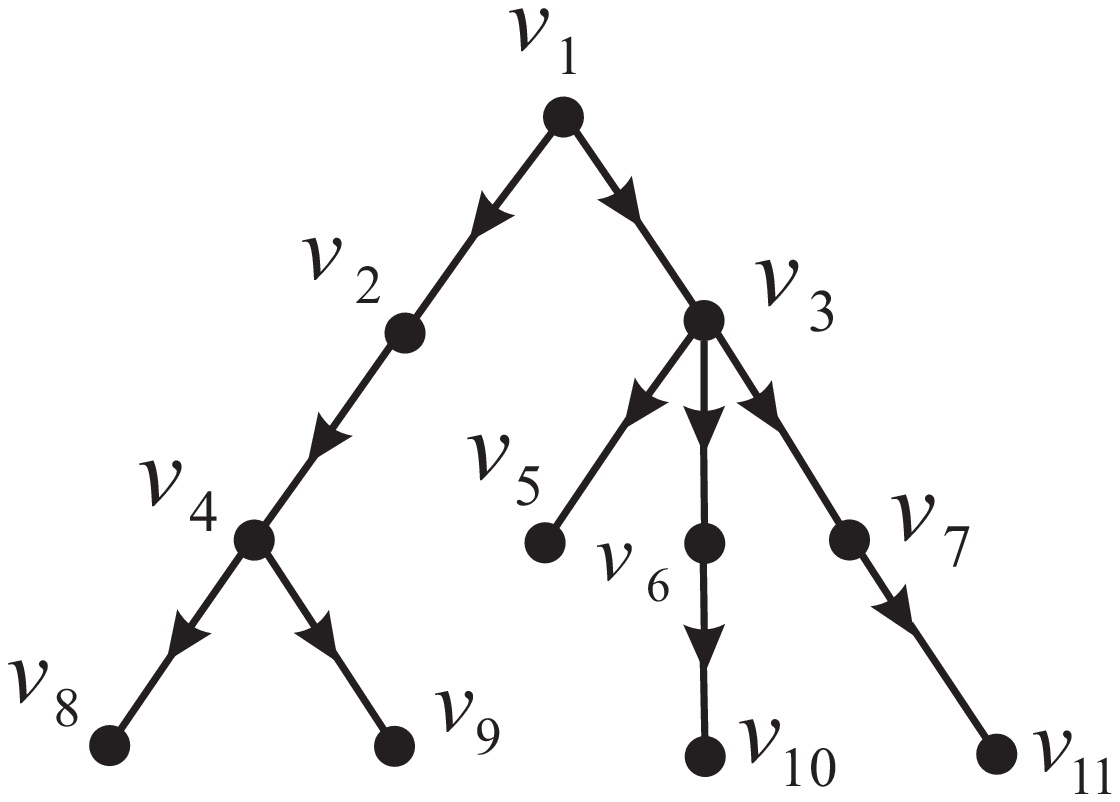}\\
\caption{\footnotesize{}}\hspace{4 cm}
\label{1}
\end{figure}
\end{center}
\end{exam}
Throughout the paper, we mean by a tree, a directed rooted tree as above and by a path, a directed path. By abuse of notation, we use $F=\{v_{i_{1}},\ldots,v_{i_{t}}\}$ where $\mathrm{level}(v_{i_1})<\cdots <\mathrm{level}(v_{i_t})$, to denote the path of length $t$ in a tree $\Gamma$ which starts from $v_{i_{1}}$ and ends at $v_{i_{t}}$, and also the corresponding facet in $\Delta_t({\Gamma})$.

In \cite{VJ}, it was shown that:

\begin{thm}\label{simplicial tree}
\cite[Theorem 2.7]{VJ} Let $\Gamma$ be a tree over $n$ vertices and $2 \leq t\leq n$. Then $\Delta_t(\Gamma)$ is a simplicial tree.
\end{thm}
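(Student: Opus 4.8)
The plan is to verify directly the two defining properties of a simplicial tree for $\Delta:=\Delta_t(\Gamma)$: that $\Delta$ is connected, and that every nonempty subcomplex has a leaf. If $\mathrm{height}(\Gamma)<t-1$ then $\Gamma$ has no directed path of length $t$, so $\Delta$ is empty and is a simplicial tree by convention; hence assume $\mathrm{height}(\Gamma)\ge t-1$. The first thing I would record is a normal form for facets. Since a directed path of length $t$ is a subpath of the unique path from the root to each of its vertices, a facet $F$ of $\Delta$ is precisely the set of the last $t$ vertices on the root-to-$v$ path, where $v=v_{i_t}$ is the endpoint of $F$; thus $F\mapsto v$ is a bijection from the facets of $\Delta$ onto the vertices of level $\ge t-1$, and writing $F=\{u_1,\dots,u_t\}$ with $\mathrm{level}(u_1)<\cdots<\mathrm{level}(u_t)=\mathrm{level}(v)$, the vertex $u_m$ is the ancestor of $v$ of level $\mathrm{level}(v)-t+m$.

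For connectedness I would use a sliding argument. If $F$ has endpoint $v$ of level $\ge t$, then the facet whose endpoint is the parent of $v$ shares with $F$ the $t-1\ge 1$ vertices $\{u_1,\dots,u_{t-1}\}$; iterating this step shows every facet is joined, through a chain of facets with nonempty consecutive intersections, to the facet $F^{0}$ whose endpoint is the ancestor of $v$ of level exactly $t-1$. Every such $F^{0}$ is the set of the first $t$ vertices on a root-to-leaf path, so it contains the root; hence any two of these facets meet, and chaining the observations gives that $\Delta$ is connected.

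The heart of the proof is the leaf property. Let $\Delta'=\langle F_1,\dots,F_q\rangle$ be a nonempty subcomplex; if $q=1$ its unique facet is a leaf, so assume $q\ge 2$. Choose $F=F_j$ whose endpoint $v$ has maximal level among the endpoints of $F_1,\dots,F_q$, and write $F=\{u_1,\dots,u_t\}$ as above. The key claim is that for every facet $F'$ of $\Delta'$ with $F'\neq F$, the intersection $F\cap F'$ is an initial segment $\{u_1,\dots,u_s\}$ of $F$ (with $s=0$ allowed and always $s\le t-1$). To prove this I would compare the root-to-$v$ path with the root-to-$v'$ path, where $v'$ is the endpoint of $F'$: these two paths agree up to their last common ancestor $u$ and then diverge, so a vertex of $F$ lies in $F'$ only if it has level $\le\mathrm{level}(u)$; combined with $\mathrm{level}(v')\le\mathrm{level}(v)$ (from the maximal choice of $v$), a short level count shows $F\cap F'$ is exactly the set of vertices of $F$ of level $\le\min(\mathrm{level}(v'),\mathrm{level}(u))$, which is an initial segment $\{u_1,\dots,u_s\}$. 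Granting the claim, pick a facet $G\neq F$ of $\Delta'$ for which the associated $s$ is largest (any $G\neq F$ if all of them are $0$); then $F\cap F'\subseteq F\cap G$ for every facet $F'\neq F$, so $F$ is a leaf of $\Delta'$.

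I expect the only genuine obstacle to be the level-counting claim that every intersection with the maximal-endpoint facet is an initial segment; the rest is bookkeeping. The point to handle carefully is the case distinction according to whether $v'$ is an ancestor of $v$ or the two are incomparable, together with the degenerate situations in which $F\cap F'=\emptyset$ — but in all of these the inequality $\mathrm{level}(v')\le\mathrm{level}(v)$ is exactly what forces the claimed form.
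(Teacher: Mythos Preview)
The paper does not give its own proof of this statement: it is quoted verbatim as \cite[Theorem~2.7]{VJ} and used as a black box. Your direct argument is correct and is essentially the proof given in the original He--Van~Tuyl paper: there, too, one picks in any subcomplex a facet $F$ whose terminal vertex has maximal level, and shows that for every other facet $F'$ the intersection $F\cap F'$ is an initial segment of $F$, so that the facet with the longest such intersection witnesses that $F$ is a leaf. Your level-count identification of $F\cap F'$ with $\{u_1,\dots,u_s\}$ via the last common ancestor is precisely the mechanism used there, and the connectedness-by-sliding argument is the standard one. There is nothing to correct; just be aware that you are reproducing the cited proof rather than comparing against a new one in the present paper.
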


\begin{thm}\label{seq. C-M}
\cite[Corollary 2.12]{VJ} Let $\Gamma$ be a tree over $n$ vertices and $2\leq t\leq n$. Then $R/I_t(\Gamma)$ is sequentially Cohen-Macaulay.
\end{thm}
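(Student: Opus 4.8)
The plan is to recognize $R/I_{t}(\Gamma)$ as the quotient of $R$ by the facet ideal of a simplicial tree and then invoke a structural theorem about such complexes. By the definitions of Section~2 we have $I_{t}(\Gamma)=I(\Delta_{t}(\Gamma))$, the facet ideal of the complex $\Delta_{t}(\Gamma)$ spanned by the directed paths of length $t$ in $\Gamma$, and by Theorem~\ref{simplicial tree} this complex is a simplicial tree. Hence it is enough to know that for \emph{every} simplicial tree (indeed every simplicial forest) $\Delta$, the ring $R/I(\Delta)$ is sequentially Cohen-Macaulay; this is precisely Faridi's theorem that simplicial trees are sequentially Cohen-Macaulay. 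Granting that input, the conclusion is a one-line deduction from Theorem~\ref{simplicial tree}.

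Since the content lies in that structural theorem, I would also recall the shape of its proof. One argues by induction on the number $q$ of facets of the simplicial forest $\Delta$. If $q=1$ then $I(\Delta)$ is principal, so $R/I(\Delta)$ is Cohen-Macaulay and in particular sequentially Cohen-Macaulay. If $q>1$, choose a leaf $F$ of $\Delta$, a branch $G$ (a facet $G\neq F$ with $F\cap F'\subseteq F\cap G$ for every facet $F'\neq F$), and a free vertex $v\in F$ (such a $v$ exists because $F\not\subseteq G$ and every vertex of $F\setminus G$ lies in no other facet). Removing the facet $F$ yields a subcomplex $\Delta'$, which is again a simplicial forest with $q-1$ facets, and one studies $R/I(\Delta)$ through the short exact sequence relating it to $R/I(\Delta')$ and to $R/(I(\Delta'):u)$, where $u=\prod_{x\in F}x$ is the monomial generator contributed by the leaf. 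Because $F$ is a leaf and $v$ is free, the colon ideal $(I(\Delta'):u)$ is, after deleting the vertices of $F$, once more the facet ideal of a simplicial forest on fewer vertices; thus the inductive hypothesis applies to every module in the sequence, and sequential Cohen-Macaulayness propagates along it via the standard behaviour of sequentially Cohen-Macaulay modules in short exact sequences together with a Krull-dimension count.

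An alternative route, worth pursuing in parallel, uses Alexander duality and the Herzog--Hibi criterion: $R/I(\Delta)$ is sequentially Cohen-Macaulay if and only if the dual ideal $I(\Delta)^{\vee}$ is componentwise linear. Here $I(\Delta)^{\vee}$ is the cover ideal of $\Delta$, minimally generated by the minimal vertex covers, so one would try to equip this generating set with an order, induced by a leaf order of $\Delta$, that realizes linear quotients --- a property stronger than, hence sufficient for, componentwise linearity. In either approach the single real obstacle is the same: controlling how the simplicial-forest structure behaves under the reduction step, that is, under peeling off a leaf and forming the colon ideal in the first approach, or under constructing the leaf-compatible order on the minimal vertex covers in the second. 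Once that combinatorial bookkeeping is in place, the theorem follows for $R/I_{t}(\Gamma)$ immediately from Theorem~\ref{simplicial tree}.
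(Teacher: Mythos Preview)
Your proposal is correct and follows exactly the route of the cited source: the paper does not give its own proof but quotes \cite[Corollary~2.12]{VJ}, where the result is obtained precisely by combining Theorem~\ref{simplicial tree} with Faridi's theorem that the facet ideal of a simplicial forest is sequentially Cohen--Macaulay. Your additional sketch of Faridi's argument (induction on the number of facets via peeling off a leaf) is accurate, though strictly speaking unnecessary here since that theorem is taken as a black box in both this paper and \cite{VJ}.
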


In this section, we focus on some other properties of the path ideal of a tree. We determine when this ideal is unmixed and hence Cohen-Macaulay.

\begin{rem}\label{C(Gamma)}
\em{Note that by removing leaves at level strictly less than $(t-1)$ from a tree $ \Gamma$ and repeating this process until $\Gamma$ has no more such leaves, one obtains a tree denoted by $C(\Gamma)$.
In \cite{BHO}, this process is called \textbf{cleaning process} and the tree $C(\Gamma)$ is called the \textbf{clean form} of $\Gamma$. Note that the generators of $I_t(\Gamma)$ and $I_t(C(\Gamma))$ are the same (but in different polynomial rings). So, the graded Betti numbers of these two ideals are also the same.}
\end{rem}

Now, we want to introduce a class of trees which plays an important role in the main result of this section.

\begin{defn}\label{partitioned}
\em{Let $\Gamma$ be a tree over $n$ vertices and $2\leq t\leq n$. Suppose that $F_1,\ldots ,F_m$ are all facets of $\Delta=\Delta_t(C(\Gamma))$ containing a leaf of $C(\Gamma)$ such that each leaf belongs to exactly one of them. If $V(\Delta)$ is the disjoint union of $F_1,\ldots ,F_m$, then we say that $\Gamma$ is \textbf{t-partitioned} (by $F_1,\ldots ,F_m$).

Now, let $\Gamma$ be a t-partitioned tree (by $F_1,\ldots ,F_m$). We define a \textbf{t-branch} of $\Gamma$, as a path of length $t+1$, say $P$, which starts at a vertex of some $F_i$, like $x$, and $P\cap F_i=\{x\}$. Then, for each $i=1,\ldots,m$, we define \textbf{degree} of $F_i$, as
$$\mathrm{Deg}_{\Gamma}(F_i):=\mathrm{the~number~of~vertices~of}~F_i~\mathrm{which~are~the~first~vertices~of~a~}t-\mathrm{branch~of}~\Gamma.$$
Moreover, we define degree of $\Gamma$, as $$\mathrm{Deg}(\Gamma):=\mathrm{max}\{\mathrm{Deg}_{\Gamma}(F_i)~:~1\leq i\leq m\}.$$
We call a t-branch of $\Gamma$, \textbf{initial} if it intersects some $F_i$ in the first vertex of $F_i$. Otherwise, we call it \textbf{non-initial}.

Also, we define \textbf{level} of a t-branch $P$ of $\Gamma$, denoted by level$(P)$, as the level of the vertex $x$, where $P\cap F_i=\{x\}$ for some $i=1,\ldots,m$.
}
\end{defn}

\begin{defn}\label{fitting partitioned}
\em{Let $\Gamma$ be a t-partitioned tree over $n$ vertices and $2\leq t\leq n$. We say that $\Gamma$ is \textbf{fitting t-partitioned}, if the following hold:
\begin{item}
\item (1) $\mathrm{Deg}(\Gamma)\leq 1$; and
\item (2) $\mathrm{level}(P)\leq t-1$, for each non-initial t-branch $P$ of $\Gamma$.
\end{item}
}
\end{defn}

\begin{exam}\label{b}
\em{(a) Let $\Gamma$ be the tree in Figure 1 and $t=4$. Then, the set of vertices of $C(\Gamma)$ is not disjoint union of $F_1=\{v_1,v_2,v_4,v_8\}$, $F_2=\{v_1,v_2,v_4,v_9\}$, $F_3=\{v_1,v_3,v_6,v_{10}\}$ and $F_4=\{v_1,v_3,v_7,v_{11}\}$. So, $\Gamma$ is not 4-partitioned. Note that by cleaning $\Gamma$,
the only vertex which is removed, is $v_5$.

(b) Let $\Gamma_1$ be the tree in Figure 2  and $t=3$. Note that vertex $v_3$ is removed in $C(\Gamma_1)$. So, the vertex set of $C(\Gamma_1)$ is the disjoint union of $F_1=\{v_1,v_4,v_7\}$, $F_2=\{v_2,v_5,v_8\}$ and $F_3=\{v_6,v_9,v_{10}\}$ and hence  $\Gamma_1$ is 3-partitioned (by $F_1,F_2,F_3$). The 3-branches of $\Gamma_1$ are $P_1=\{v_1,v_2,v_5,v_8\}$, $P_2=\{v_1,v_2,v_6,v_9\}$ and $P_3=\{v_2,v_6,v_9,v_{10}\}$ which are all initial. Also, $P_1$ and $P_2$ intersect $F_1$, and $P_3$ intersects $F_2$. We have level$(P_1)=\mathrm{level}(P_2)=\mathrm{level}(v_1)=0$ and level$(P_2)=\mathrm{level}(v_2)=1$. Moreover, note that Deg$_{\Gamma_1}(F_1)=\mathrm{Deg}_{\Gamma_1}(F_2)=1$, Deg$_{\Gamma_1}(F_3)=0$ and hence Deg$(\Gamma_1)=1$. Thus, by Definition \ref{fitting partitioned}, $\Gamma_1$ is a fitting 3-partitioned tree.
\begin{center}
\begin{figure}
\hspace{0 cm}
\includegraphics[height=4.3cm,width=4.2cm]{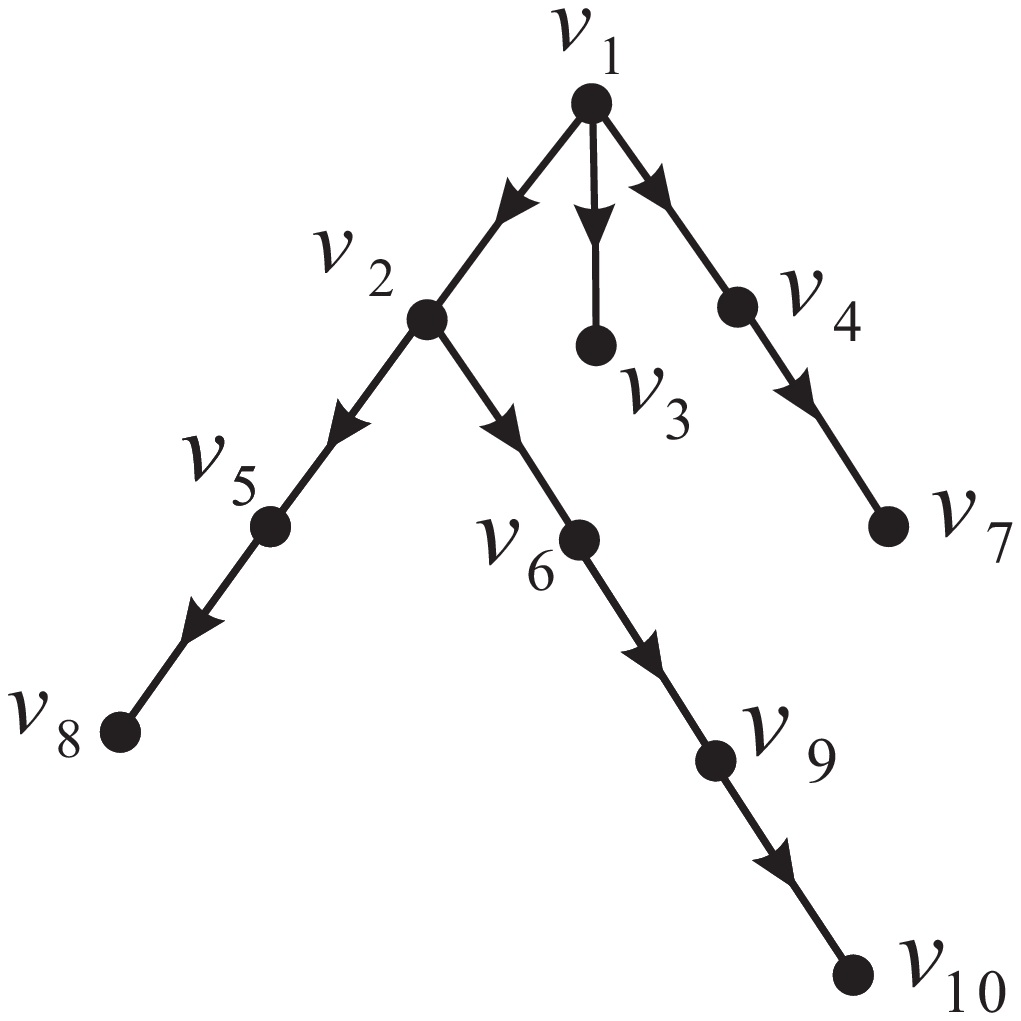}
\caption{\footnotesize{}}\hspace{4 cm}
\label{1}
\end{figure}
\end{center}
(c)  Let $\Gamma_2$ be the tree in Figure 3 and $t=3$. We have $C(\Gamma_2)=\Gamma_2$. Also, $F_1=\{v_1,v_2,v_3\}$, $F_2=\{v_4,v_5,v_7\}$ and $F_3=\{v_6,v_8,v_9\}$ are the facets mentioned in Definition \ref{partitioned}. The vertex set of $\Gamma_2$ is the disjoint union of $F_1$, $F_2$ and $F_3$. So, $\Gamma_2$ is 3-partitioned. In addition, $P_1=\{v_2,v_4,v_5,v_7\}$, $P_2=\{v_2,v_4,v_5,v_6\}$, and $P_3=\{v_5,v_6,v_8,v_9\}$ are the only 3-branches of $\Gamma_2$, where both of them are non-initial and we have level$(P_1)=\mathrm{level}(P_2)=\mathrm{level}(v_2)=1$ and level$(P_3)=\mathrm{level}(v_5)=3$. Although Deg$(\Gamma)=$Deg$_{\Gamma_2}(F_1)=\mathrm{Deg}_{\Gamma_1}(F_2)=1$, $\Gamma_2$ is not fitting 3-partitioned, since level$(P_2)=3>2$.
\begin{center}
\begin{figure}
\hspace{0 cm}
\includegraphics[height=4.9cm,width=2.8cm]{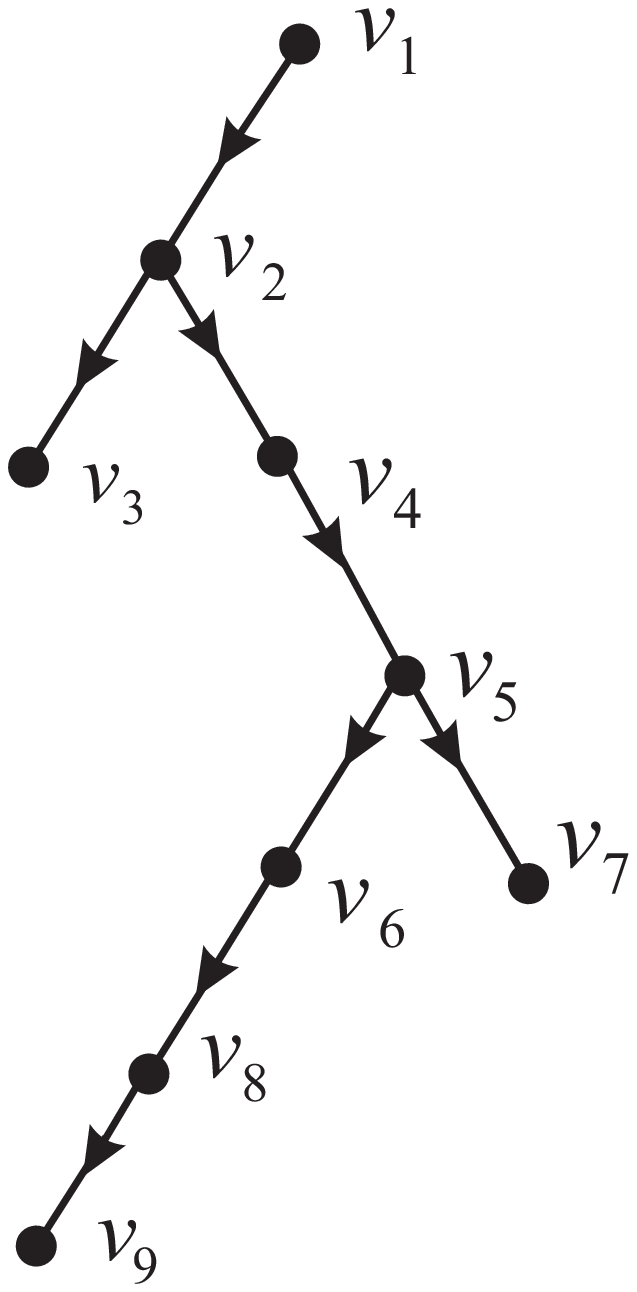}
\caption{\footnotesize{}}\hspace{4 cm}
\label{1}
\end{figure}
\end{center}
(d) Let $\Gamma_3$ be the tree in Figure 4 and $t=3$. We have $C(\Gamma_3)=\Gamma_3$. Also, the vertex set of $\Gamma_3$ is the disjoint union of $F_1=\{v_2,v_5,v_8\}$, $F_2=\{v_1,v_3,v_6\}$ and $F_3=\{v_4,v_7,v_9\}$. So, $\Gamma_3$ is 3-partitioned. In addition, $P_1=\{v_1,v_2,v_5,v_8\}$ and $P_2=\{v_1,v_4,v_7,v_9\}$ are the only 3-branches of $\Gamma_3$, where both of them are initial and we have level$(P_1)=\mathrm{level}(P_2)=\mathrm{level}(v_1)=0$. Also, we have Deg$(\Gamma_3)=$Deg$_{\Gamma_3}(F_2)=1$ and so $\Gamma_3$ is a fitting 3-partitioned tree.
\begin{center}
\begin{figure}
\hspace{0 cm}
\includegraphics[height=3.8cm,width=4.9cm]{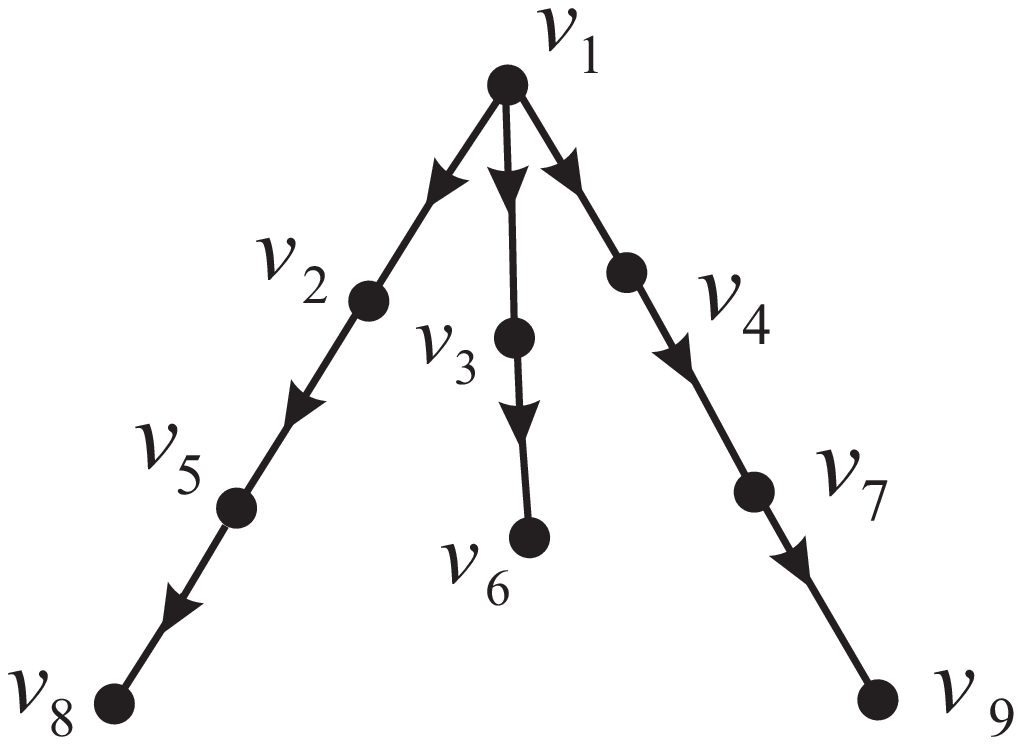}
\caption{\footnotesize{}}\hspace{4 cm}
\label{1}
\end{figure}
\end{center}
}
\end{exam}

\begin{rem}
\em{Note that in Definition \ref{partitioned}, the only case in which a leaf might belong to more than one of the facets $F_1,\ldots,F_m$, is when the root of the tree is also a leaf. For instance, you can see in the Example \ref{b}, part (c), (see Figure 3), that we do not consider $\{v_1,v_2,v_4\}$ as some $F_i$, since the root, $v_1$, also belongs to $F_1=\{v_1,v_2,v_3\}$ and considering $F_1$ is necessary, as the leaf $v_3$ just belongs to it. }
\end{rem}

We need the following theorem to prove the main result of this section:

\begin{thm}\label{clutter}
\cite[Corollary 2.19]{MRV} Let $\mathcal{C}$ be a totally balanced clutter with the K\"{o}nig property. Then $\mathcal{C}$ is unmixed
if and only if there is a perfect matching $E_1,\ldots,E_g$ of K\"{o}nig type such that $E_i$ has a free vertex for all $i$, and for
any two edges $E,E'$ of $\mathcal{C}$ and for any edge $E_i$ of the perfect matching, one has that $E\cap E_i \subset E'\cap E_i$ or $E'\cap E_i \subset E\cap E_i$.
\end{thm}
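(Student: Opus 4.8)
The plan is to prove both implications directly from the combinatorics of vertex covers, invoking the simplicial-forest description of totally balanced clutters (\cite[Theorem 3.2]{HHTZ}) and the König property of simplicial forests (\cite[Theorem 5.3]{F}) only for the harder direction. For the implication ``$\Leftarrow$'', suppose $E_1,\dots,E_g$ is a perfect matching of König type satisfying the stated conditions, and let $A$ be an arbitrary minimal vertex cover of $\mathcal{C}$. Since the $E_i$ are pairwise disjoint and each must be met by $A$, we get $|A|\geq g$; since $V(\mathcal{C})=E_1\cup\cdots\cup E_g$, every vertex of $A$ lies in some $E_i$, so it suffices to show $|A\cap E_i|\leq 1$ for each $i$. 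Suppose instead that $u,w\in A\cap E_i$ with $u\neq w$. By minimality of $A$ there are edges $E,E'$ with $A\cap E=\{u\}$ and $A\cap E'=\{w\}$. Then $u\in E\cap E_i$ and $w\in E'\cap E_i$, and the chain hypothesis forces $E\cap E_i$ and $E'\cap E_i$ to be comparable; assuming $E\cap E_i\subseteq E'\cap E_i$ gives $u\in E'$, whence $u\in A\cap E'=\{w\}$, contradicting $u\neq w$. Hence $|A|=g$ for every minimal vertex cover, so $\mathcal{C}$ is unmixed. Note that this direction uses neither the free-vertex hypothesis nor total balancedness.

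For ``$\Rightarrow$'', assume $\mathcal{C}$ is unmixed. I would first record a general observation: in any clutter without isolated vertices every vertex lies in some minimal vertex cover, because for $u\in E$ the set $E\setminus\{u\}$ contains no edge (by the Sperner property of clutters), hence is independent and extends to a maximal independent set $I$ with $u\notin I$, so $u$ belongs to the minimal cover $V(\mathcal{C})\setminus I$. Now take a maximum matching $E_1,\dots,E_g$, so that $g$ equals the covering number by König, and set $W=E_1\cup\cdots\cup E_g$. Unmixedness forces every minimal vertex cover to have exactly $g$ elements, one in each $E_i$, hence to be contained in $W$; combined with the previous observation this shows $W=V(\mathcal{C})$. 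Thus every maximum matching is automatically a perfect matching of König type.

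It remains to choose the matching so that the free-vertex and chain conditions also hold, and here total balancedness is essential. By \cite[Theorem 3.2]{HHTZ}, $\mathcal{C}$ is the facet clutter of a simplicial forest, and the chain condition on $E_i$ is precisely the assertion that $E_i$ is a \emph{good leaf}, i.e. the sets $E\cap E_i$ are totally ordered as $E$ ranges over the edges; moreover a leaf $F$ with joint $G$ always has a free vertex, namely any vertex of $F\setminus G$. The plan is to induct on the number of facets: relying on the existence of a good leaf $F$ in every simplicial forest, declare $F$ the first matching edge and pass to the deletion minor $\mathcal{C}'$ on $V(\mathcal{C})\setminus F$ consisting of the edges disjoint from $F$. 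One checks that $\mathcal{C}'$ is again the facet clutter of a simplicial forest, hence has the König property by \cite[Theorem 5.3]{F}, applies the induction hypothesis to obtain a good-leaf perfect matching of $\mathcal{C}'$, and adjoins $F$ to produce the required matching of $\mathcal{C}$.

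The main obstacle is exactly the inductive step of ``$\Rightarrow$'': one must show that deleting the good leaf $F$ lowers the covering number by precisely one and preserves the unmixedness of $\mathcal{C}'$, so that the induction hypothesis is available, and one must verify that the chain condition \emph{globalizes}, namely that a good leaf of the minor $\mathcal{C}'$ remains a good leaf of the full clutter $\mathcal{C}$ once the deleted edges meeting $F$ are accounted for. Controlling how those deleted edges intersect the retained matching edges, and using acyclicity to exclude the configurations that would break comparability, is the delicate point, and is where the full strength of total balancedness—beyond mere König plus unmixedness—is needed.
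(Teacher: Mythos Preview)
First, note that the paper does \emph{not} supply its own proof of this theorem: it is simply quoted as \cite[Corollary~2.19]{MRV} and then used as a black box in the proof of Theorem~\ref{unmixed}. So there is no in-paper argument to compare against; I can only comment on the soundness of your attempt on its own terms.

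Your ``$\Leftarrow$'' direction is complete and correct. The key step---that two distinct vertices $u,w$ of $A\cap E_i$ would, via the witnessing edges $E,E'$ with $A\cap E=\{u\}$ and $A\cap E'=\{w\}$, force $u\in E'$ by the chain hypothesis and hence $u=w$---is clean, and your remark that neither the free-vertex hypothesis nor total balancedness is used here is accurate. Likewise, your opening of ``$\Rightarrow$'' is fine: the argument that in an unmixed clutter with the K\"onig property every maximum matching must already be a perfect matching of K\"onig type (because any minimal cover has size $g$, meets each $E_i$, hence lies in $W$, and every vertex sits in some minimal cover) is correct and is the standard first move.

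The genuine gap is exactly the one you flag yourself at the end, and it is not merely a technicality. Your induction produces, from a good leaf $F$ of $\mathcal{C}$, a matching $E_2,\dots,E_g$ of the deletion minor $\mathcal{C}'$ on $V\setminus F$ satisfying the chain condition \emph{relative to the edges of $\mathcal{C}'$}. But the edges of $\mathcal{C}$ that meet $F$ have been thrown away in $\mathcal{C}'$, and you must still show that for each $i\ge 2$ the family $\{E\cap E_i : E\in E(\mathcal{C})\}$ is totally ordered---including those deleted edges $E$ with $E\cap F\neq\emptyset$. A good leaf of a subcomplex is in general \emph{not} a good leaf of the ambient simplicial forest, so the ``globalization'' you need does not come for free, and nothing in the argument as written prevents an edge $E$ that meets $F$ from intersecting some later $E_i$ in a set incomparable to $E'\cap E_i$ for $E'\in\mathcal{C}'$. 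Equally, the claims that the covering number of $\mathcal{C}'$ drops by exactly one and that $\mathcal{C}'$ inherits unmixedness are not argued. As it stands, then, the forward direction is an outline with its hardest step left open; to close it along these lines you would need a structural statement about how edges meeting a good leaf of a simplicial forest can intersect the remaining facets (this is where the acyclicity of totally balanced clutters does real work), or else switch to the approach of \cite{MRV}, which derives the result from their general characterization of unmixed clutters with a perfect matching of K\"onig type rather than by an induction on leaves.
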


The next theorem is the main theorem of this section:

\begin{thm}\label{unmixed}
Let $\Gamma$ be a tree over $n$ vertices and $2\leq t\leq n$. Then $I_t(\Gamma)$ is unmixed if and only if $\Gamma$ is fitting t-partitioned.
\end{thm}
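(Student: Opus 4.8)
The plan is to work entirely inside the clutter $\mathcal{C}$ whose edges are the facets of $\Delta_t(C(\Gamma))$, i.e.\ the paths of length $t$ in the clean form $C(\Gamma)$; by Remark~\ref{C(Gamma)} and Theorem~\ref{simplicial tree} this clutter is the clutter of facets of a simplicial tree, hence totally balanced, and by \cite[Theorem 5.3]{F} it has the K\"onig property. So Theorem~\ref{clutter} applies: unmixedness of $I_t(\Gamma)=I(\mathcal{C})$ is equivalent to the existence of a perfect matching $E_1,\dots,E_g$ of K\"onig type in which each $E_i$ has a free vertex and which satisfies the ``nested intersection'' condition $E\cap E_i\subseteq E'\cap E_i$ or vice versa for all edges $E,E'$ and all $i$. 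The whole proof is an unwinding of these two conditions into the combinatorics of $\Gamma$, organized as two implications.

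\emph{Fitting $t$-partitioned $\Rightarrow$ unmixed.} Here I would take the partitioning facets $F_1,\dots,F_m$ from Definition~\ref{partitioned} as the candidate perfect matching. By definition of $t$-partitioned, they are pairwise disjoint and cover $V(\Delta)$, so I must check they have minimal cardinality, i.e.\ $m=\height I(\mathcal{C})=g$; since $\mathcal C$ has the K\"onig property and the $F_i$ are $m$ pairwise disjoint edges, it suffices to exhibit a vertex cover of size $m$ — the set of last vertices of the $F_i$ will do, because every path of length $t$ is "short enough" that it must meet one of the $F_i$ in a terminal vertex; this is exactly where condition (2) on the level of non-initial $t$-branches is used. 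Each $F_i$ contains a leaf of $C(\Gamma)$, which by construction of $C(\Gamma)$ lies at level $\geq t-1$ and hence is a free vertex of $\Delta_t(C(\Gamma))$. Finally the nested intersection property: given a path $E$ of length $t$ and a partitioning facet $F_i$, one must see that $E\cap F_i$ is an initial segment of the linearly ordered set $F_i$ (ordered by level), and that for two paths $E,E'$ these initial segments are comparable. This comes down to: if two paths of length $t$ both meet $F_i$, they share the ``upper'' part of $F_i$ near the root — and the hypothesis $\mathrm{Deg}(\Gamma)\leq 1$ (condition (1)) is precisely what forbids two $t$-branches emanating from distinct vertices of the same $F_i$, which would otherwise produce incomparable intersections. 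I expect this direction to require a careful case analysis according to whether the relevant $t$-branches are initial or non-initial and at which level they attach.

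\emph{Unmixed $\Rightarrow$ fitting $t$-partitioned.} Conversely, suppose $I_t(\Gamma)$ is unmixed and fix a perfect matching $E_1,\dots,E_g$ with free vertices and the nested property guaranteed by Theorem~\ref{clutter}. First I would argue that $\Gamma$ is $t$-partitioned: a free vertex of $\Delta_t(C(\Gamma))$ must be a leaf of $C(\Gamma)$ (any non-leaf vertex at level $\geq t-1$, and indeed every vertex of $C(\Gamma)$ once we are in clean form, lies on at least two paths of length $t$ unless the tree is very degenerate), so each $E_i$ contains a leaf; since the $E_i$ partition $V(\Delta)$ and each leaf lies in a unique $E_i$, the $E_i$ are exactly the facets $F_1,\dots,F_m$ of Definition~\ref{partitioned}, forcing $V(\Delta)$ to be their disjoint union. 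Then I would derive (1) and (2) by contradiction: if some $F_i$ had $\mathrm{Deg}_\Gamma(F_i)\geq 2$, two $t$-branches from distinct vertices $x,y\in F_i$ together with $F_i$ itself would violate the nested intersection condition at $E_i=F_i$; and if some non-initial $t$-branch $P$ had level $\geq t$, extending $P$ would produce a path of length $t$ avoiding all terminal vertices of the $F_i$, contradicting that $g=\height I(\mathcal C)$ equals the size of the natural vertex cover — equivalently, producing a minimal vertex cover of size $>g$, contradicting unmixedness.

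The main obstacle will be translating ``a path of length $t$ meets a partitioning facet $F_i$ in an initial segment, and these segments are nested'' cleanly — this is the combinatorial heart and it hides several subcases (initial vs.\ non-initial $t$-branches, the special role of the root when the root is a leaf, and the bookkeeping of levels). I would isolate it as a preliminary lemma describing, for a $t$-partitioned tree, exactly how an arbitrary path of length $t$ intersects each $F_i$ in terms of $t$-branches and their levels, and then both implications become short deductions from Theorem~\ref{clutter}.
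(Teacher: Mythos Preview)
Your overall strategy---reduce both directions to the Morey--Reyes--Villarreal criterion (Theorem~\ref{clutter}) applied to the simplicial tree $\Delta_t(C(\Gamma))$---is exactly the paper's approach. But the execution of the ``fitting $\Rightarrow$ unmixed'' direction contains a genuine error, and it propagates into your handling of condition~(2) in both directions.

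The error is your choice of vertex cover. You propose the set of \emph{last} vertices of the $F_i$ (the leaves) as a minimal vertex cover of size $m$. This is false already in Example~\ref{b}(b): with $t=3$, $F_1=\{v_1,v_4,v_7\}$, $F_2=\{v_2,v_5,v_8\}$, $F_3=\{v_6,v_9,v_{10}\}$, the last vertices are $\{v_7,v_8,v_{10}\}$, but the path $\{v_1,v_2,v_5\}$ meets none of them. The correct cover (the one the paper uses) is the set of \emph{first} vertices $\{v_{1_1},\dots,v_{m_1}\}$, and verifying that it is a vertex cover does not use condition~(2) at all. Condition~(2) enters later, in the nested-intersection check: when $\mathrm{Deg}_\Gamma(F_i)=1$ with the unique branching vertex $x=v_{i_s}$ non-initial, you need every path of length $t$ whose last vertex lies in $F_i$ to pass through $x$; this holds precisely because $\mathrm{level}(x)\le t-1$, which is condition~(2). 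Your description of the intersections as ``initial segments near the root'' is thus also slightly off---the correct statement is that all nonempty intersections with $F_i$ contain a common pivot vertex $x$ and are nested around it.

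The same misplacement affects your ``unmixed $\Rightarrow$ fitting'' sketch. Your proposed contradiction for a non-initial $t$-branch at level $\ge t$ (produce a path avoiding all terminal vertices, hence a larger minimal vertex cover) rests on the same false cover and does not work. The paper instead builds two explicit paths---one ending at $v_{i_{s-1}}$ (which exists because $\mathrm{level}(v_{i_s})\ge t$) and one starting at $v_{i_s}$ along the $t$-branch---whose intersections with $F_i$ are $\{v_{i_1},\dots,v_{i_{s-1}}\}$ and $\{v_{i_s}\}$, violating the nested-intersection clause of Theorem~\ref{clutter} directly. With these two fixes (first vertices for the cover; condition~(2) used inside the nested-intersection argument on both sides) your plan matches the paper's proof.
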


\begin{proof}
By Theorem \ref{simplicial tree} and Remark \ref{C(Gamma)}, we have $\Delta=\Delta_t(C(\Gamma))$ is a simplicial tree. Moreover, a simplicial tree is totally balanced by \cite[Theorem 3.2]{HHTZ} and also has the K\"{o}nig property by \cite[Theorem 5.3]{F}. Also, note that $\Delta=\Delta_t(\Gamma)$.\\

\textbf{``Only if"} Suppose that $I_t(\Gamma)$ is unmixed. So, $\Delta$ is unmixed. Thus, by Theorem \ref{clutter}, there exist disjoint facets $E_1,\ldots,E_g$ of $\Delta$ such that $E_i$ has a free vertex for all $i=1,\ldots,g$ and $V(\Delta)=\bigcup_{i=1}^{g}E_i$, where $g=\mathrm{ht}(I_t(\Gamma))$. Suppose that $F_1,\ldots ,F_m$ are all facets of $\Delta$ containing a leaf of $C(\Gamma)$ such that each leaf belongs to exactly one of them. First we show that $V(\Delta)=\bigcup_{j=1}^{m}F_j$. Let $E_i=\{v_{i_{1}},\ldots,v_{i_{t}}\}$ for each $i=1,\ldots,g$, where $\mathrm{level}(v_{i_1})<\cdots <\mathrm{level}(v_{i_t})$. Now fix an integer $i=1,\ldots,g$. We consider two following cases:

Case (1). Suppose that $v_{i_t}$ is a leaf of $C(\Gamma)$.  Note that if a leaf of $C(\Gamma)$ is not the root, then it is contained in exactly one facet of $\Delta$, that is some $F_j$. So, in this case, there exists $j_i\in \{1,\ldots,m\}$ such that $E_i=F_{j_i}$.

Case (2). Suppose that $v_{i_t}$ is not a leaf of $C(\Gamma)$. Then there exists a vertex $x$ with level greater than $v_{i_t}$'s and adjacent to $v_{i_t}$. Thus, $v_{i_{2}},\ldots,v_{i_{t}}$ are contained in the facet $G=\{v_{i_{2}},\ldots,v_{i_{t}},x\}$ and hence are not free. But $E_i$ has a free vertex, so $v_{i_1}$ should be free. Therefore, $v_{i_1}$ is the root of $C(\Gamma)$, since otherwise there is a vertex of level less than $v_{i_1}$ and adjacent to it and so $v_{i_1}$ is contained in another facet, a contradiction. If $v_{i_1}$ is not a leaf, then there exists a vertex $y\neq v_{i_2}$ adjacent to $v_{i_1}$ such that $\mathrm{level}(v_{i_2})=\mathrm{level}(y)$. Thus, $v_{i_1}$ and $y$ are contained in a path of length $t$ and so a facet of $\Delta$, since $C(\Gamma)$ does not have any leaves at level strictly less than $(t-1)$. So, $v_{i_1}$ is not a free vertex, a contradiction. Therefore, $v_{i_1}$ is a leaf of $C(\Gamma)$ which is just contained in $E_i$. So, by the way of picking $F_i$'s, there exists $j_i=1,\ldots,m$ such that $E_i=F_{j_i}$.

Thus, by the above cases and the fact $V(\Delta)=\bigcup_{i=1}^{g}E_i$, we have $V(\Delta)=\bigcup_{j=1}^{m}F_j$.

Now, we show that the $F_i$'s are disjoint. If for each $i=1,\ldots,m$, the last vertex of $F_i$ is a leaf, then $F_i$ is equal to some $E_{i_j}$ and so the result follows. If there exists some $F_i$ which contains the root, say $z$, as a leaf and its last vertex is not a leaf, then $z$ is only contained in $F_i$ and the other $F_i$'s are as the previous case and so they are disjoint, by our assumption. We may assume that $i=1$. Let $\alpha$ be the number of vertices of $F_1$ not contained in $\bigcup_{j=2}^{m}F_j$. So, $0< \alpha \leq t$, since $z$ has this property. On the other hand, $V(\Delta)$ is the disjoint union of $E_i$'s. Hence, we have $gt=(m-1)t+\alpha$. Thus, $\alpha=t$ and $g=m$. So, $F_i$'s are disjoint and so
$V(\Delta)$ is the disjoint union of $F_1,\ldots,F_m$. Hence, $\Gamma$ is t-partitioned (by $F_1,\ldots ,F_m$). Also, without loss of generality, we can assume that $F_i=E_i$ for each $i=1,\ldots,m$.

Now suppose that $\Gamma$ is not fitting t-partitioned. So, we have Deg$(\Gamma)>1$ or there exists a non-initial t-branch $P$ of $\Gamma$ such that level$(P)\geq t$.\\
If  Deg$(\Gamma)>1$, then there exists an integer $i=1,\ldots,m$ such that Deg$_{\Gamma}(F_i)>1$. Thus, $F_i$ contains at least two distinct vertices $v_{i_{s}}$ and $v_{i_{l}}$ which are the first vertices of two different t-branches, say $P$ and $P'$. So, $P$ and $P'$ are paths of length $t+1$ starting from $v_{i_{s}}$ and $v_{i_{l}}$, respectively. Clearly, by omitting the last vertex of $P$ (resp. $P'$), we get a path of length $t$ starting from $v_{i_{s}}$ (resp. $v_{i_{l}}$), say $P_{v_{i_{s}}}$ (resp. $P_{v_{i_{l}}}$). So, we have $P_{v_{i_{s}}}\cap F_i=\{v_{i_{s}}\}$ and $P_{v_{i_{l}}}\cap F_i=\{v_{i_{l}}\}$, none of them contains the other. By Theorem \ref{clutter}, it is a contradiction, since $\Delta$ is unmixed.\\
Now, suppose that there exists a non-initial t-branch $P$ of $\Gamma$ such that level$(P)\geq t$. Also, suppose that $v_{i_{s}}$ is the intersection of $P$ and some $F_i$.
So, $v_{i_{s}}$ is not the first vertex of $F_i$ and level$(v_{i_{s}})\geq t$. Let $P_{v_{i_{s}}}$ be a path of length $t$ starting from $v_{i_{s}}$ (as we discussed in the previous case). So, we have $P_{v_{i_{s}}}\cap F_i=\{v_{i_{s}}\}$. On the other hand, since level$(v_{i_{s}})\geq t$ and $v_{i_{s}}$ is not the first vertex of $F_i$, there is a path of length $t$ in $C(\Gamma)$ ending at $v_{i_{s-1}}$, say $H$. Thus $H\cap F_i=\{v_{i_{1}},\ldots,v_{i_{s-1}}\}$. Therefore,
none of $H\cap F_i$ and $P_{v_{i_{s}}}\cap F_i$ contains the other, again a contradiction, by Theorem \ref{clutter}. Thus $\Gamma$ is a fitting t-partitioned tree.\\

\textbf{``If"} Suppose that $\Gamma$ is a fitting t-partitioned tree (by $F_1,\ldots ,F_m$). We should show that $\Delta$ is unmixed. Since $F_i$'s are disjoint, we have $m\leq\mathrm{ht}(I_t(\Gamma))=$ covering number of $\Delta$. Let $v_{i_{1}}$ be the first vertex of $F_i$, for all $i=1,\ldots,m$. It is not difficult to see that $S=\{v_{1_{1}},\ldots,v_{m_{1}}\}$ is a minimal vertex cover of $\Delta$. Thus ht$(I_t(\Gamma))=m$. So, we have $F_1,\ldots,F_m$ is a perfect matching of K\"{o}nig type for $\Delta$, since $\Gamma$ is t-partitioned. Moreover, each $F_i$ contains a leaf of $C(\Gamma)$ and hence it has a free vertex. Therefore, by Theorem \ref{clutter}, it is enough to show that for any two facets $E$ and $E'$ of $\Delta$ and for each $F_i$, one has $E\cap F_i \subset E'\cap F_i$ or $E'\cap F_i \subset E\cap F_i$. So, fix an integer $i=1,\ldots,m$ and suppose that $E$ and $E'$ are two facets of $\Delta$. If $E\cap F_i=\emptyset$ or $E'\cap F_i=\emptyset$, then there is nothing to prove. So, suppose that both of the intersections are non-empty. Now, since $\mathrm{Deg}(\Gamma)\leq 1$, we can consider the following cases:

Case (1). Suppose that $\mathrm{Deg}_{\Gamma}(F_i)=0$. So, there does not exist any t-branch intersecting $F_i$. Thus, none of the
vertices of $F_i$ is contained in
some path of length $t$ whose last vertex does not belong to $F_i$, since the $F_i$'s are disjoint.
So, the only possible choice for $E$ and $E'$ is such that the last vertices of $E$ and $E'$ belong to $F_i$. Let $v_{i_{j}}$ and $v_{i_{l}}$ be the last vertices of $E$ and $E'$, respectively. Also, suppose that level$(v_{i_{j}})\leq$ level$(v_{i_{l}})$. Note that because $C(\Gamma)$ is a tree, there exists a unique path from the root to each vertex. So, we have
$E\cap F_i=\{v_{i_{1}},\ldots,v_{i_{j}}\}\subseteq \{v_{i_{1}},\ldots,v_{i_{l}}\}=E'\cap F_i$, as desired.

Case (2). Suppose that $\mathrm{Deg}_{\Gamma}(F_i)=1$. So, there is exactly one vertex $x$ in $F_i$ intersecting some t-branches of $\Gamma$. Thus, we can only choose those paths whose last vertices belong to $F_i$ or paths of the form $P_{x}$ (similar to what we explained in ``Only if" part) or paths whose last vertices belong to a path of the form $P_{x}$, as $E$ and $E'$. Note that, those paths whose last vertices belong to $F_i$ contains $x$, since $x$ is the first vertex of $F_i$ or level$(x)\leq t-1$. Thus, in each choice, we have $E\cap F_i\subseteq E'\cap F_i$ or $E'\cap F_i\subseteq E\cap F_i$. Therefore, similar to the previous case, we get the result.
\end{proof}

Combining Theorem \ref{unmixed}, Theorem \ref{seq. C-M} and Theorem \ref{seq-unmixed}, we have the following important corollary:

\begin{cor}\label{CM}
Let $\Gamma$ be a tree over $n$ vertices, $2 \leq t\leq n$ and $r\geq 2$. Then the following
conditions are equivalent:\\
\indent {\em {(i)}} $R/I_t(\Gamma)$ is unmixed.\\
\indent {\em {(ii)}} $R/I_t(\Gamma)$ is Cohen-Macaulay.\\
\indent {\em {(iii)}} $R/I_t(\Gamma)$ is $S_r$.\\
\indent {\em {(iv)}} $\Gamma$ is fitting t-partitioned.
\end{cor}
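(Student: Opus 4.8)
\textbf{Proof plan for Corollary \ref{CM}.}

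The plan is to derive the whole cycle of equivalences from the three theorems already in hand: Theorem \ref{unmixed} (unmixedness $\iff$ fitting $t$-partitioned), Theorem \ref{seq. C-M} (sequential Cohen-Macaulayness, which always holds for $R/I_t(\Gamma)$), and Theorem \ref{seq-unmixed} (the dictionary between Cohen-Macaulayness, respectively $S_r$, and the conjunction of the corresponding sequential property with unmixedness). First I would observe that the equivalence (i)$\iff$(iv) is exactly Theorem \ref{unmixed}, restated at the level of $R/I_t(\Gamma)$ rather than $I_t(\Gamma)$; there is nothing to do there beyond noting that ``$R/I_t(\Gamma)$ unmixed'' and ``$I_t(\Gamma)$ unmixed'' mean the same thing.

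Next I would prove (i)$\iff$(ii). The implication (ii)$\implies$(i) is standard: a Cohen-Macaulay squarefree monomial ring is unmixed (all associated primes, equivalently all minimal vertex covers of $\Delta_t(\Gamma)$, have the same cardinality). For the converse, assume (i); since $R/I_t(\Gamma)$ is always sequentially Cohen-Macaulay by Theorem \ref{seq. C-M}, and $I_t(\Gamma)$ is unmixed by hypothesis, Theorem \ref{seq-unmixed} applied to the ideal $I = I_t(\Gamma)$ yields that $R/I_t(\Gamma)$ is Cohen-Macaulay. This gives (i)$\implies$(ii) and closes the loop (i)$\iff$(ii)$\iff$(iv).

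For (iii) I would argue that $S_r$ sits between Cohen-Macaulayness and unmixedness and is therefore forced to be equivalent to both. Concretely: (ii)$\implies$(iii) because Cohen-Macaulay implies $S_r$ for every $r$; and (iii)$\implies$(i) because a module satisfying $S_r$ with $r \geq 2$ (in particular $S_1$, which is implied) has no embedded primes and, for a squarefree monomial ideal, $S_1$ already forces equidimensionality — or, more cleanly, invoke the ``$S_r$'' half of Theorem \ref{seq-unmixed} in the direction that needs unmixedness: $R/I_t(\Gamma)$ is sequentially $S_r$ (being sequentially Cohen-Macaulay, hence sequentially $S_r$), so if it is additionally $S_r$ then Theorem \ref{seq-unmixed} forces $I_t(\Gamma)$ to be unmixed. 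Either way (iii)$\implies$(i). Combined with (ii)$\implies$(iii) and the already-established (i)$\iff$(ii), all four statements are equivalent.

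I do not expect a genuine obstacle here — the corollary is a formal consequence of the quoted results — but the one point that needs a moment of care is making sure the hypotheses of Theorem \ref{seq-unmixed} are cited in the correct direction for each implication: that theorem is an ``if and only if'', and what we use is the \emph{easy} direction (sequential + unmixed $\implies$ Cohen-Macaulay, resp. $S_r$) to get (i)$\implies$(ii) and to get the contrapositive-free passage from (iii) back to (i). The constancy hypothesis ``$r \geq 2$'' is only needed so that the statement of (iii) is non-vacuous and matches the $S_r$ clause of Theorem \ref{seq-unmixed}; for $r = 1$ the equivalence with the others would still hold but is uninteresting, which is presumably why it is excluded.
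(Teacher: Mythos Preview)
Your proposal is correct and matches the paper's approach exactly: the paper's proof is the single sentence ``Combining Theorem \ref{unmixed}, Theorem \ref{seq. C-M} and Theorem \ref{seq-unmixed}'', and you have simply unpacked that combination implication by implication. One small cleanup: for (iii)$\Rightarrow$(i) your first suggestion via $S_1$ and ``equidimensionality'' is shaky (for squarefree monomial ideals $S_1$ is automatic and does not by itself force purity), so stick with your second route, namely the ``only if'' direction of Theorem \ref{seq-unmixed}, which gives $S_r \Rightarrow$ unmixed directly.
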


By the above corollary and Example \ref{b}, we have that $\Gamma$ and $\Gamma_2$ are not Cohen-Macaulay, but $\Gamma_1$ and $\Gamma_3$ are.

As a consequence of Corollary \ref{CM}, we have the following corollary on the simplest kind of trees, i.e. lines. By $L_n$, we mean the line over $n$ vertices with directed edges $e_1,\ldots,e_{n-1}$, where $e_i$ is from $v_i$ to $v_{i+1}$ for $i=1,\ldots,n-1$.

\begin{cor}\label{L_n}
Let $2\leq t\leq n$. Then $R/I_{t}(L_{n})$ is Cohen-Macaulay if and only if $t=n$ or $n/2$.
\end{cor}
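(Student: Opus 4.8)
\textbf{Proof plan for Corollary \ref{L_n}.}
The plan is to apply Corollary \ref{CM}: $R/I_t(L_n)$ is Cohen-Macaulay if and only if $L_n$ is fitting $t$-partitioned, so it suffices to determine exactly when the line $L_n$ has this property. First I would analyze the cleaning process on $L_n$. Since the unique leaf at the non-root end is $v_n$, which sits at level $n-1$, cleaning removes $v_n$ only if $n-1 < t-1$, i.e. $n < t$; since we assume $t \le n$, no vertex is ever removed, so $C(L_n)=L_n$. Next, the only path of length $t$ containing a leaf is $F_1 = \{v_1,\dots,v_t\}$ (containing the root $v_1$, which is a leaf of the underlying undirected tree) together with paths ending at $v_n$; but only $v_n$ need be covered as a leaf, and the facet containing it is $\{v_{n-t+1},\dots,v_n\}$. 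So the facets $F_i$ of Definition \ref{partitioned} are exactly $F_1 = \{v_1,\dots,v_t\}$ and $F_2 = \{v_{n-t+1},\dots,v_n\}$ (which may coincide). I would then check when $V(\Delta) = \{v_1,\dots,v_n\}$ is the disjoint union of these: this forces either $F_1 = F_2$ and they already exhaust the vertex set, which happens exactly when $n = t$; or $F_1 \cap F_2 = \emptyset$ and $F_1 \cup F_2 = \{v_1,\dots,v_n\}$, which requires $n - t + 1 = t + 1$, i.e. $n = 2t$. These are the two candidate partitioned cases.

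For the case $n = t$ there is a single facet, $I_t(L_n)$ is principal, $R/I_t(L_n)$ is a hypersurface hence Cohen-Macaulay, and $L_n$ is trivially fitting $t$-partitioned (no $t$-branches exist). For the case $n = 2t$, so that $L_n$ is $t$-partitioned by $F_1 = \{v_1,\dots,v_t\}$ and $F_2 = \{v_{t+1},\dots,v_{2t}\}$, I would enumerate the $t$-branches: a $t$-branch is a path of length $t+1$ meeting some $F_i$ in a single vertex. On a line, the only path of length $t+1$ is $\{v_j,v_{j+1},\dots,v_{j+t}\}$, and such a path meets $F_1$ or $F_2$ in exactly one vertex only for a couple of values of $j$; a short computation shows each $F_i$ contains at most one vertex that is the start of a $t$-branch, so $\mathrm{Deg}(L_n) \le 1$, and one checks the level condition (2) is satisfied, so $L_n$ is fitting $t$-partitioned. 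Conversely, if $n \ne t$ and $n \ne 2t$, then by the analysis above $L_n$ is not $t$-partitioned at all, hence not fitting $t$-partitioned, so $R/I_t(L_n)$ is not Cohen-Macaulay. Combining, $R/I_t(L_n)$ is Cohen-Macaulay iff $n = t$ or $n = 2t$, equivalently $t = n$ or $t = n/2$.

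The main obstacle I anticipate is the bookkeeping in identifying the facets $F_1,\dots,F_m$ precisely and verifying the disjoint-union condition, in particular handling the subtlety (flagged in the Remark following Example \ref{b}) that the root $v_1$ is itself a leaf of the underlying tree and hence forces $F_1 = \{v_1,\dots,v_t\}$ into the list; one must be careful that in the $n = 2t$ case $v_1 \in F_1$ does not cause an overlap, and that $F_1$ and $F_2$ genuinely partition the vertices. Once the two partitioned cases $n \in \{t, 2t\}$ are pinned down, verifying the fitting conditions (1) and (2) on a line is an elementary finite check, and the non-partitioned case is immediate. An alternative, more self-contained route would bypass Definition \ref{partitioned} entirely: directly compute $\mathrm{ht}(I_t(L_n))$ and compare with $n - \dim$, but invoking Corollary \ref{CM} keeps the argument uniform with the rest of the section, so that is the route I would take.
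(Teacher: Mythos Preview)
Your proposal is correct and follows exactly the route the paper intends: the corollary is stated without proof as an immediate consequence of Corollary~\ref{CM}, and your plan simply makes explicit the verification that $L_n$ is fitting $t$-partitioned precisely when $n=t$ or $n=2t$. Your identification of the two candidate facets $F_1=\{v_1,\dots,v_t\}$ and $F_2=\{v_{n-t+1},\dots,v_n\}$, the disjoint-union analysis forcing $n\in\{t,2t\}$, and the check of conditions (1) and (2) in the $n=2t$ case (where the unique $t$-branch is $\{v_t,\dots,v_{2t}\}$, non-initial with level $t-1$) are all accurate and match what the paper leaves to the reader.
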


\begin{rem}\label{pd}
\em{ Suppose that $F_1,\ldots ,F_m$ are all facets of $\Delta=\Delta_t(C(\Gamma))$ containing a leaf of $C(\Gamma)$ such that each leaf belongs to exactly one of them. Note that by the proof of Theorem \ref{unmixed}, if $R/I_t(\Gamma)$ is Cohen-Macaulay, then we have $\mathrm{ht}(I_t(\Gamma))=m$. So, $\mathrm{depth}(R/I_t(\Gamma))=\mathrm{dim}(R/I_t(\Gamma))=n-m$ and hence $\mathrm{pd}(R/I_t(\Gamma))=m$, by Auslander-Buchsbaum formula.}
\end{rem}

\begin{rem}\label{t=2}
\em{Note that for $t=2$, Corollary \ref{CM} yields the previous result on the Cohen-Macaulayness of the edge ideal of a tree (see \cite[Theorem 6.3.4]{V} and the main theorem of \cite{HHZ}). In the case $t=2$, there are not any differences between various directions assigning to $\Gamma$. So, one can pick each vertex as a root and obtain $I_2(\Gamma)=I(\Gamma)$.}
\end{rem}

\section{Trees with Gorenstein path ideals}

\noindent In this section, we determine complete intersection and Gorenstein path ideals of trees. Also, as a consequence, we we characterize those trees such that all powers of their path ideals are Cohen-Macaulay.

First recall that a \textbf{matroid} is
a collection of subsets of a finite set, called independent sets, with the following
properties:\\
\indent (i) The empty set is independent.\\
\indent (ii) Every subset of an independent set is independent.\\
\indent (iii) If $F$ and $G$ are two independent sets and $F$ has more elements than $G$, then
\indent there exists an element in $F$ which is not in $G$ that when added to $G$ still gives
\indent an independent set.\\
Clearly, we may consider a matroid as a simplicial complex.

Also, note that the path ideal of length $t$ of a tree $\Gamma$, can be viewed as a
Stanely-Reisner ideal of a simplicial complex $\Delta_{n,t}$ by setting: $F \subseteq [n]$ is a face of $\Delta_{n,t}$ if and only if $F$ contains no $t$ consecutive vertices. So, we have $I_{t}(\Gamma)=I_{\Delta_{n,t}}$.

Moreover, we need the following characterization of Gorenstein simplicial complexes:

\begin{thm}\label{Stanley}
\cite[Chapter II, Theorem 5.1]{S} Fix a field $k$ (or $\mathbb{Z}$). Let $\Delta$ be a simplicial complex and
$\Lambda:=\mathrm{core}(\Delta)$. Then the following are equivalent:\\
\indent {\em {(i)}} $\Delta$ is Gorenstein.\\
\indent {\em {(ii)}} either (1) $\Delta=\emptyset$, o , or o o , or (2) $\Delta$ is Cohen-Macaulay over $k$ of dimension
\indent $d-1\geq 1$, and the link of every $(d-3)$-face is either a circle or o-o or o-o-o , and
\indent $\widetilde{\chi}(\Lambda)={(-1)}^{\mathrm{dim(\Lambda)}}$ (the last condition
is superfluous over $\mathbb{Z}$ or if $\mathrm{char}(k)=2$).\\
Here, $\mathrm{core}(\Delta)=\Delta_{\mathrm{core}(V)}$, in which $\mathrm{core}(V)=\{v\in V~:~\mathrm{st}_{\Delta}\{v\}\neq \Delta\}$.
\end{thm}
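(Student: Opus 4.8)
The plan is to derive this classical characterization from local duality and Hochster's formula, first reducing the Gorenstein property to an intrinsic homological condition on all links of $\Delta$, and then repackaging that condition into the stated combinatorial form. I would begin with the standard reduction to the core. If $v$ is a cone point of $\Delta$ (a vertex contained in every facet), then no minimal nonface of $\Delta$ involves $x_v$, so $I_\Delta$ is extended from the smaller vertex set and $k[\Delta]\cong k[\Delta\setminus v][x_v]$; since a Noetherian ring $S$ is Gorenstein if and only if $S[x]$ is, $\Delta$ is Gorenstein if and only if its deletion of $v$ is. Iterating removes every cone point, so it suffices to treat $\Delta=\mathrm{core}(\Delta)=\Lambda$, after isolating by hand the degenerate complexes listed in (ii)(1).

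Next I would translate Gorensteinness into local cohomology. Gorenstein forces Cohen-Macaulay, which by Reisner's criterion means $\widetilde H_i(\mathrm{lk}_\Delta\sigma;k)=0$ for every face $\sigma$ and every $i<\dim\mathrm{lk}_\Delta\sigma$; under this hypothesis the only nonvanishing local cohomology is $H^d_{\mathfrak m}(k[\Delta])$ with $d=\dim\Delta+1$, and by graded local duality the canonical module is its Matlis dual. The Cohen-Macaulay type equals $\dim_k\Soc H^d_{\mathfrak m}(k[\Delta])$, and Gorenstein is exactly type one. The decisive input is Hochster's formula, which computes the $\mathbb{Z}^n$-graded pieces as
\[
\dim_k H^i_{\mathfrak m}(k[\Delta])_{\mathbf a}=\dim_k\widetilde H^{\,i-|\sigma|-1}(\mathrm{lk}_\Delta\sigma;k),\qquad \sigma=\{\,j:a_j<0\,\},
\]
valid when $\sigma\in\Delta$ and $\mathbf a\le 0$, and zero otherwise. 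Specializing to $i=d$, the graded components of the socle are governed by the top reduced cohomology $\widetilde H^{\dim\mathrm{lk}_\Delta\sigma}(\mathrm{lk}_\Delta\sigma;k)$ of each link. Analyzing how these assemble under the $\mathfrak m$-action shows the socle is one-dimensional precisely when every link, including $\mathrm{lk}_\Delta\emptyset=\Delta$ itself, has top reduced homology equal to $k$. Together with Reisner vanishing this yields the intrinsic criterion: for $\Delta=\mathrm{core}(\Delta)$, $k[\Delta]$ is Gorenstein over $k$ if and only if every link is a $k$-homology sphere of the expected dimension, i.e. $\widetilde H_i(\mathrm{lk}_\Delta\sigma;k)$ equals $k$ for $i=\dim\mathrm{lk}_\Delta\sigma$ and vanishes below.

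Finally I would convert this homology-sphere criterion into statement (ii). Cohen-Macaulayness is recorded directly. For a $(d-3)$-face $\sigma$ the link is $1$-dimensional, and being a $k$-homology $1$-sphere (up to the cone points that survive when $\Delta\neq\Lambda$) forces it to be a circle, a path o-o-o, or an edge o-o; these are exactly the three $1$-dimensional Gorenstein complexes, realized as joins of cone points with a $k$-homology sphere of dimension $1$, $0$, or $-1$. The only global obstruction that remains is the top homology of $\Lambda$: since Cohen-Macaulayness already kills $\widetilde H_i(\Lambda;k)$ for $i<\dim\Lambda$, one has $\widetilde\chi(\Lambda)=(-1)^{\dim\Lambda}\dim_k\widetilde H_{\dim\Lambda}(\Lambda;k)$, so $\widetilde\chi(\Lambda)=(-1)^{\dim\Lambda}$ holds if and only if that top homology is $k$. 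I expect the socle computation in the middle step, and the dependence on the coefficient field, to be the main obstacle: over $\mathbb{Z}$ or in characteristic $2$ the link conditions plus Cohen-Macaulayness already force the top homology to have the correct rank by a duality and orientation argument, which is exactly why the Euler-characteristic condition becomes superfluous there, and pinning down this characteristic dependence together with the bookkeeping of the low-dimensional exceptional complexes is where the real care is required.
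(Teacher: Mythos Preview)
The paper does not prove this statement: Theorem~\ref{Stanley} is quoted verbatim from Stanley's book \cite[Chapter~II, Theorem~5.1]{S} and used as a black box in the proof of Theorem~\ref{Gor}, with no argument supplied. So there is nothing in the paper to compare your proposal against.

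That said, your outline is essentially the standard route taken in Stanley's book and in Bruns--Herzog: reduce to the core, invoke Hochster's formula and local duality to identify the Gorenstein condition with every link of $\Lambda$ being a $k$-homology sphere of the appropriate dimension, and then repackage this combinatorially. One direction you underdevelop is (ii)$\Rightarrow$(i): you explain why the homology-sphere condition on all links forces the $1$-dimensional links to be circles (in the core) and hence gives (ii), but the converse---that the hypotheses on the $(d-3)$-links together with Cohen--Macaulayness and the Euler-characteristic constraint propagate upward to make \emph{every} link a $k$-homology sphere---needs an explicit inductive argument (link of a link, plus the connectivity forced by Cohen--Macaulayness). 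Your remarks about why the $\widetilde\chi$ condition is automatic over $\ZZ$ or in characteristic $2$ are also only a gesture; making that precise requires the orientation/Poincar\'e-duality argument that Stanley carries out. None of this is wrong, but as written the sketch covers mainly the (i)$\Rightarrow$(ii) direction in detail.
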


Now we are ready to prove the main theorem of this section.

\begin{thm}\label{Gor}
Let $\Gamma$ be a tree over $n$ vertices and $2 \leq t\leq n$. Then the following
conditions are equivalent:\\
\indent {\em {(i)}} $R/I_t(\Gamma)$ is a complete intersection.\\
\indent {\em {(ii)}} $R/I_t(\Gamma)$ is Gorenstein.\\
\indent {\em {(iii)}} $\Delta_{n,t}$ is a matroid.\\
\indent {\em {(iv)}} $C(\Gamma)$ is $L_t$.
\end{thm}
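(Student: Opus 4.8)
The strategy is to prove the cycle of implications (iv) $\Rightarrow$ (i) $\Rightarrow$ (ii) $\Rightarrow$ (iii) $\Rightarrow$ (iv), since (iv) is the most concrete condition and gives us a foothold, while (iii) $\Rightarrow$ (iv) will be the substantive direction. First I would establish (iv) $\Rightarrow$ (i): if $C(\Gamma)=L_t$, then by Remark \ref{C(Gamma)} the generators of $I_t(\Gamma)$ coincide with those of $I_t(L_t)$, and $L_t$ has exactly one directed path of length $t$, so $I_t(\Gamma)$ is a principal ideal, hence trivially a complete intersection. The implication (i) $\Rightarrow$ (ii) is standard commutative algebra: every complete intersection is Gorenstein. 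For (ii) $\Rightarrow$ (iii) I would argue that a Gorenstein squarefree monomial ideal is in particular Cohen-Macaulay, so by Corollary \ref{CM} the tree $\Gamma$ is fitting $t$-partitioned; combined with Remark \ref{pd}, Gorensteinness forces the Cohen-Macaulay type to be $1$, and I expect to be able to read off from the fitting $t$-partitioned structure (together with Theorem \ref{Stanley} applied to $\Delta = \Delta_t(\Gamma)$, whose Stanley-Reisner realization is $\Delta_{n,t}$) that this happens only when there is a single facet, i.e. a single path of length $t$; a simplicial complex with one facet (a simplex) is a matroid, giving (iii).

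The delicate direction is (iii) $\Rightarrow$ (iv). Here I would suppose $\Delta_{n,t}$ is a matroid and aim to show $C(\Gamma)$ has exactly one directed path of length $t$, equivalently $\Delta = \Delta_t(\Gamma) = \Delta_t(C(\Gamma))$ has exactly one facet, and that this forces $C(\Gamma) = L_t$. The key point is that $\Delta_{n,t}$ is the complex whose non-faces are generated by the "path monomials", i.e. its facets are the maximal subsets of $[n]$ avoiding any directed path of length $t$; its being a matroid is a strong combinatorial restriction. I would argue by contradiction: if $C(\Gamma)$ has two distinct paths of length $t$, say $F$ and $F'$ (as facets of $\Delta_t(C(\Gamma))$), then one can exhibit two independent sets of $\Delta_{n,t}$ violating the exchange axiom (iii) in the definition of matroid — concretely, by taking complements of $F$ and $F'$ inside $[n]$ and adjusting, or by finding a vertex $v$ in one path but not the other such that some maximal $v$-avoiding set cannot be augmented. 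The combinatorial structure of a tree — unique path from the root to each vertex, so the path facets of $\Delta_t(C(\Gamma))$ are highly constrained — should let me pin down that the only way to avoid such a violation is for $C(\Gamma)$ to be a single line $L_t$. Alternatively, and perhaps more cleanly, once $\Delta_{n,t}$ is a matroid it is Cohen-Macaulay, so $\Gamma$ is fitting $t$-partitioned by Corollary \ref{CM}; then I would show directly that a fitting $t$-partitioned tree whose $\Delta_{n,t}$ is a matroid must have $m=1$ in the partition $F_1,\dots,F_m$, because with $m\geq 2$ one gets disjoint facets of $\Delta$ and the complement of one inside the ground set fails the exchange property against another.

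The main obstacle I anticipate is the bookkeeping in (iii) $\Rightarrow$ (iv): translating the abstract matroid exchange axiom, stated for $\Delta_{n,t}$ (whose faces are "no $t$ consecutive vertices" subsets), into a statement about the paths of the tree $C(\Gamma)$, and handling the edge case where the root is itself a leaf (as flagged in the Remark after Example \ref{b}). I would want to reduce immediately to $C(\Gamma)$ via Remark \ref{C(Gamma)} so that there are no "short" leaves to worry about, then use the fitting $t$-partitioned description from Section 3 to get a concrete handle on all the facets of $\Delta$, and finally check the exchange axiom on a minimal pair of witnesses. A secondary subtlety is making sure the equivalence of (i) and (ii) with the others is genuinely an equivalence and not just the easy implications — i.e. that Gorenstein does not occur in some degenerate tree beyond $C(\Gamma)=L_t$ — but this is handled by the $(ii)\Rightarrow(iii)\Rightarrow(iv)$ chain, so no separate argument is needed.
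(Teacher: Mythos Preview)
Your plan is close to the paper's, and your (iii)$\Rightarrow$(iv) via ``matroid $\Rightarrow$ Cohen--Macaulay $\Rightarrow$ fitting $t$-partitioned, then violate the exchange axiom when $m\geq 2$'' is exactly the paper's argument (carried out there with an explicit case split on whether the $t$-branch linking $F_2$ to $F_1$ meets $F_1$ at its first vertex or not). But there is a genuine error in your (ii)$\Rightarrow$(iii). You conflate the facet complex $\Delta_t(\Gamma)$ with the Stanley--Reisner complex $\Delta_{n,t}$: condition (iii) concerns $\Delta_{n,t}$, and when $C(\Gamma)=L_t$ this complex is \emph{not} a simplex---its faces are all $S\subseteq [n]$ not containing the unique path $F$, so it has $t\geq 2$ facets, namely $[n]\setminus\{v\}$ for $v\in F$. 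Thus your sentence ``a simplicial complex with one facet (a simplex) is a matroid, giving (iii)'' does not establish (iii). What you would actually need is that a complex with a single minimal non-face is a matroid; this is true and elementary, but it is not the statement you wrote. The paper sidesteps this entirely: it never argues (ii)$\Rightarrow$(iii) or (iv)$\Rightarrow$(iii) directly, but instead obtains (i)$\Rightarrow$(iii) by citing Terai--Trung \cite{TT}, and separately proves (ii)$\Rightarrow$(iv) and (iii)$\Rightarrow$(iv).

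A second, smaller gap: for the embedded (ii)$\Rightarrow$(iv) step you only say you ``expect to read off'' $m=1$ from Stanley's criterion and the type-$1$ condition. The paper's argument here is a concrete link computation: assuming $m\geq 2$, one removes from $[n]$ the first vertex of each $F_i$ together with two further well-chosen vertices to produce a codimension-$2$ face $H$ of $\Delta_{n,t}$ whose link is a path on four vertices, which is not among the links allowed by Theorem~\ref{Stanley}. You would need to supply such an explicit witness; invoking Cohen--Macaulay type $1$ alone does not obviously force $m=1$ without this computation.
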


\begin{proof}

(i) $\Rightarrow$ (ii) is clear.

(i) $\Rightarrow$ (iii) follows by \cite[Theorem 3.6 and Theorem 4.3]{TT}.

(ii) $\Rightarrow$ (iv) Suppose that $R/I_t(\Gamma)$ is Gorenstein. So, it is also Cohen-Macaulay and hence by Corollary \ref{CM}, $\Gamma$ is fitting t-partitioned (by $F_1,\ldots,F_m$). Without loss of generality, we assume that $F_1$ is the path containing the root of $\Gamma$. Moreover, let $F_i=\{v_{i_1},\ldots,v_{i_t}\}$, with $\mathrm{level}(v_{i_1})<\cdots <\mathrm{level}(v_{i_t})$, for all $i=1,\ldots,m$. Now, suppose on the contrary that $C(\Gamma)$ is not $L_t$. So, $m>1$, because $\Gamma$ is t-partitioned. Therefore, there exists some $F_i$ which constructs a path of length $t+1$ with a vertex $v_{1_s}$ of $F_1$, in which $s=1,\ldots,t$. In other words, $\{v_{1_s}\}\cup F_i$ is a t-branch of $\Gamma$. We assume that $i=2$. Let $G:=[n]\setminus \bigcup_{i=1}^{m}\{v_{i_1}\}$ for all $i=1,\ldots,m$. So, $G$ does not contain any $t$ consecutive vertices. Note that by Remark \ref{pd}, we have dim$(\Delta_{n,t})+1=$dim$(R/I_t(\Gamma))=n-m>1$. Now we consider two cases:
\\
Case (1). Let $s=1$. Then set $H:=G\setminus \{v_{1_t},v_{2_t}\}$. Note that $H$ does not contain any $t$ consecutive vertices. Hence it is a face of $\Delta_{n,t}$ of cardinality $n-m-2$. Also, we have $\mathrm{lk}_{\Delta_{n,t}} H=\langle \{v_{1_1},v_{2_t}\}, \{v_{1_t},v_{2_t}\},\{v_{1_t},v_{2_1}\}\rangle$, which is a path over four vertices.
\\
Case (2). Let $s>1$. Then set $H:=G\setminus \{v_{1_s},v_{2_t}\}$. Note that $H$ does not contain any $t$ consecutive vertices. Hence it is a face of $\Delta_{n,t}$ of cardinality $n-m-2$. Also, we have $\mathrm{lk}_{\Delta_{n,t}} H=\langle \{v_{1_1},v_{2_1}\}, \{v_{1_1},v_{2_t}\},\{v_{1_s},v_{2_t}\}\rangle$, which is a path over four vertices.
\\
Thus, by the above cases, we see that $\mathrm{lk}_{\Delta_{n,t}} H$ is not of the forms mentioned in Theorem \ref{Stanley}. So, ${\Delta_{n,t}}$ is not Gorenstein, a contradiction to the fact that $R/I_t(\Gamma)$ is Gorenstein.

(iii) $\Rightarrow$ (iv) Suppose that $\Delta_{n,t}$ is a matroid. For $t=2$, we have $C(\Gamma)=\Gamma$. So, if $\Gamma$ has more than one edge, then obviously $\Delta_{n,2}$, which is precisely the independence complex of $\Gamma$, is not a matroid, a contradiction. So, suppose that $t>2$. Note that every matroid is Cohen-Macaulay (see \cite[Theorem 3.4]{S}). So, we consider $F_1,\ldots,F_m$ similar to the previous part of the proof and suppose on the contrary that $m>1$. We assume that $F_1$, $F_2$ and $v_{1_{s}}$ are the same as in the previous part. Now we consider two cases: \\
Case (1). Let $s=1$. Then set $G:=(F_1\setminus \{v_{1_t}\})\cup (F_2\setminus \{v_{2_{(t-1)}},v_{2_t}\})$ and $H:=(F_1\setminus \{v_{1_1}\})\cup (F_2\setminus \{v_{2_t}\})$. Note that $G$ and $H$ do not contain any $t$ consecutive vertices. Hence they are faces of $\Delta_{n,t}$ of cardinality $2t-3$ and $2t-2$, respectively. On the other hand, $H\setminus G=\{v_{1_t},v_{2_{(t-1)}}\}$. But, $G\cup \{v_{1_t}\}$ and $G\cup \{v_{2_{(t-1)}}\}$ do not belong to $\Delta_{n,t}$, since both of them contain $t$ consecutive vertices. Thus, by definition, $\Delta_{n,t}$ is not a matroid, a contradiction.
\\
Case (2). Let $s>1$. Then set $G:=(F_1\setminus \{v_{1_{(s-1)}}\})\cup (F_2\setminus \{v_{2_{(t-1)}},v_{2_t}\})$ and $H:=(F_1\setminus \{v_{1_s}\})\cup (F_2\setminus \{v_{2_t}\})$. Note that $G$ and $H$ do not contain any $t$ consecutive vertices. Hence they are faces of $\Delta_{n,t}$ of cardinalities $2t-3$ and $2t-2$, respectively. On the other hand, $H\setminus G=\{v_{1_{(s-1)}},v_{2_{(t-1)}}\}$. But, we have $G\cup \{v_{1_{(s-1)}}\}$ and $G\cup \{v_{2_{(t-1)}}\}$ do not belong to $\Delta_{n,t}$, since both of them contain some $t$ consecutive vertices. Thus, by definition, $\Delta_{n,t}$ is not a matroid, a contradiction.\\
So, by the above cases, we get the desired result.

(iv) $\Rightarrow$ (i) is clear.
\end{proof}

\begin{rem}
\em{Notice that Theorem \ref{Gor} implies the result of \cite[Corollary 2.1]{HHZ} about Gorenstein property in the case that $t=2$ and $G$ is a tree.}
\end{rem}

Denote by $I^{(m)}$, the $m$-th symbolic power of the ideal $I$. We end this section with the following corollary which is obtained by Theorem \ref{Gor} and \cite[Theorem 3.6 and Theorem 4.3]{TT}:

\begin{cor}\label{powers}
Let $\Gamma$ be a tree over $n$ vertices, $2 \leq t\leq n$ and $I:=I_t(\Gamma)$. Then the following
conditions are equivalent:\\
\indent {\em {(i)}} $I^{m}$ {\em (}resp. $I^{(m)}${\em )} is Cohen-Macaulay for every $m\geq 1$.\\
\indent {\em {(ii)}} $I^{m}$ {\em (}resp. $I^{(m)}${\em )} is Cohen-Macaulay for some $m\geq 3$.\\
\indent {\em {(iii)}} $C(\Gamma)$ is $L_t$.
\end{cor}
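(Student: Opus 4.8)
The plan is to deduce Corollary \ref{powers} by combining Theorem \ref{Gor} with the cited results of Terai--Trung \cite[Theorem 3.6 and Theorem 4.3]{TT}. The latter theorems say, for a squarefree monomial ideal $I=I_\Delta$ with $\Delta$ its Stanley--Reisner complex, that the following are equivalent: $I^{(m)}$ is Cohen--Macaulay for all $m\geq 1$; $I^{(m)}$ is Cohen--Macaulay for some $m\geq 3$; and $\Delta$ is a matroid complex; and similarly with the symbolic powers $I^{(m)}$ replaced by the ordinary powers $I^m$ (using that for our $\Delta_{n,t}$ the two coincide when $\Delta_{n,t}$ is a matroid, or else quoting the ordinary-power version of \cite{TT} directly). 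Since we established in the preliminaries that $I_t(\Gamma)=I_{\Delta_{n,t}}$, i.e.\ $\Delta_{n,t}$ is exactly the Stanley--Reisner complex of $I_t(\Gamma)$, these equivalences apply verbatim with $I=I_t(\Gamma)$ and $\Delta=\Delta_{n,t}$.

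Concretely, I would argue as follows. First, (iii) $\Rightarrow$ (i): if $C(\Gamma)=L_t$, then by the equivalence (iv) $\Leftrightarrow$ (iii) of Theorem \ref{Gor}, $\Delta_{n,t}$ is a matroid, and hence by \cite[Theorem 3.6 and Theorem 4.3]{TT}, $I^m$ (resp.\ $I^{(m)}$) is Cohen--Macaulay for every $m\geq 1$. Next, (i) $\Rightarrow$ (ii) is trivial (take $m=3$). Finally, (ii) $\Rightarrow$ (iii): if $I^m$ (resp.\ $I^{(m)}$) is Cohen--Macaulay for some $m\geq 3$, then again by \cite[Theorem 3.6 and Theorem 4.3]{TT}, $\Delta_{n,t}$ must be a matroid, so by Theorem \ref{Gor} (iii) $\Leftrightarrow$ (iv), $C(\Gamma)=L_t$. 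This closes the cycle of implications.

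The only genuine point requiring care is the bookkeeping about which complex and which power are meant: one must be certain that the version of \cite[Theorem 3.6 and Theorem 4.3]{TT} being invoked is stated for the Stanley--Reisner complex $\Delta_{n,t}$ (not, say, for the facet complex $\Delta_t(\Gamma)$), and that it covers both ordinary and symbolic powers with the same threshold $m\geq 3$. Given the setup in the excerpt -- where $I_t(\Gamma)$ is explicitly identified with $I_{\Delta_{n,t}}$ and where matroid-ness of $\Delta_{n,t}$ has already been characterized in Theorem \ref{Gor} -- this is the natural and essentially only obstacle, and it is resolved simply by matching notation. There is no further combinatorial or homological work to do; the corollary is a direct corollary in the literal sense.

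Thus the proof reduces to a short paragraph: invoke Theorem \ref{Gor} to translate (iii) into ``$\Delta_{n,t}$ is a matroid,'' then invoke \cite[Theorem 3.6 and Theorem 4.3]{TT} to translate that into the Cohen--Macaulayness statements for all powers and for some power $m\geq 3$, and observe that the trivial implication (i) $\Rightarrow$ (ii) completes the equivalence.
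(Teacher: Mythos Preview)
Your proposal is correct and matches the paper's own argument: the corollary is stated as an immediate consequence of Theorem \ref{Gor} together with \cite[Theorem 3.6 and Theorem 4.3]{TT}, and you have simply unpacked that citation into the explicit chain (iii) $\Rightarrow$ (i) $\Rightarrow$ (ii) $\Rightarrow$ (iii) via the matroid condition on $\Delta_{n,t}$.
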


\providecommand{\byame}{\leavevmode\hbox
to3em{\hrulefill}\thinspace}

\end{document}